\newcommand{\tstar}[5]{
\pgfmathsetmacro{\starangle}{360/#3}
\draw[#5] (#4:#1)
\foreach \x in {1,...,#3}
{ -- (#4+\x*\starangle-\starangle/2:#2) -- (#4+\x*\starangle:#1)
}
-- cycle;
}
\newcommand{\ngram}[4]{
\pgfmathsetmacro{\starangle}{360/#2}
\pgfmathsetmacro{\innerradius}{#1*sin(90-\starangle)/sin(90+\starangle/2)}
\tstar{\innerradius}{#1}{#2}{#3}{#4}
}
\newcommand{\ccs}{\mbox{$\stackrel{a.s}{\longrightarrow}$}}
\newcommand{\E}{\mathbb{E}}
\newcommand{\eps}{\varepsilon}
\newtheorem{theorem}{Theorem}[section]
\newtheorem{remark}{Remark}
\newtheorem{proposition}{Proposition}
\theoremstyle{remark}
\newtheorem{definition}[theorem]{Definition}
\begin{document}

  \title{\bf Statistical analysis of measures of non-convexity}
  \author{Alejandro Cholaquidis\thanks{
    The authors gratefully acknowledge support from grant FCE-1-2019-1-156054, ANII (Uruguay), grant PID2020-116587GB-I00 funded by MCIN/AEI/10.13039/501100011033 and ED431C 2021/24 funded by Conseller\'{i}a de Cultura, Educaci\'{o}n e Universidade.}\hspace{.2cm}\\
    Facultad de Ciencias, Universidad de la Rep\'{u}blica (Uruguay)\\
    and \\
    Ricardo Fraiman \\
    Facultad de Ciencias, Universidad de la Rep\'{u}blica (Uruguay)\\
    and \\
    Leonardo Moreno \\
    Departamento de M\'{e}todos Cuantitativos, Facultad de Ciencias Econ\'{o}micas y de Administraci\'{o}n, Universidad de la Rep\'{u}blica (Uruguay)\\
    and \\
    Beatriz Pateiro-L\'{o}pez \\
    CITMAga (Galician Center for Mathematical Research and Technology)\\ and Departamento de Estat\'{i}stica, An\'{a}lise Matem\'{a}tica  e Optimizaci\'{o}n,\\ Universidade de Santiago de Compostela (Spain)}
  \maketitle
 
\begin{abstract}
Several measures of non-convexity (departures from convexity) have been introduced in the literature, both for sets and functions. Some of them are of geometric nature, while others are more of topological nature.  We address the statistical analysis of some of these measures of non-convexity of a set $S$, by dealing with their estimation based on a sample of points in $S$. We introduce also a new measure of non-convexity. We discuss briefly about these different notions of non-convexity, prove consistency and find the asymptotic distribution for the proposed estimators. We also consider the practical implementation of these estimators and illustrate their  applicability to a real data example.
\end{abstract}

\noindent%
{\it Keywords:}  convex hull, convexity measure, shape analysis, skin lesions
\vfill

\section{Introduction} 
Convex sets and convex functions play an important role in many areas of mathematics and statistics. Convexity intrinsically combines geometry, linear algebra, analysis and topology in several ways \citep{gruber1993handbook, roc1970}. 
The notion of convexity is central, for instance, in mathematical programming, in the sub-field of convex programming \citep{boy2004}. Convex optimization problems arise in a variety of applications, including control theory, finance, or engineering, and are inherent to many statistical and machine learning methods that require the minimization of a given loss function. 
In addition to its role in the underlying optimization problems, convexity is also relevant in other areas of statistics, such as set estimation, where the goal is to approximate a set from a random sample of points taken into it \citep{cue2009}. The set of interest may be the support of a distribution or different functionals associated with it, such as its Lebesgue measure or  surface area. In this context, shape restrictions are usually imposed.

Convexity is probably one of the most important and has given rise to a considerable amount of literature on the subject. We refer to \citep{bru2018} for a survey on the estimation of convex sets.
However, in some practical applications (like, for instance, in many set estimation problems, or in optimization problems where the domain is not convex) convexity can be a quite restrictive hypothesis. In this regard, to gain an insight into how much a set or a function departures from convexity can be useful to tune the parameters, or adapt (or even change) the algorithms or the estimation methods. 

\

\noindent{\emph{Measures of non-convexity of sets.}}	Several measures of non-convexity were introduced in the literature, both for sets and functions, see \citep{fradelizi2018} for a review of some of the most commonly used and their properties. Many of these measures follow naturally from the standard definition of convexity and its usual characterizations. Recall that a  set $S \subset \mathbb R^d$  is convex if and only if it contains every line segment whose endpoints both belong to $S$. If the set is not convex, this will not always be true. Thus, we can think of the probability that, for two randomly chosen points in $S$, the line segment connecting them is contained in $S$, as a measure of non-convexity.
Extending the notion of line segment to convex combinations of points in $S$, we get an alternative and equivalent characterization of a convex set. The convex hull of $S$, denoted by $\text{co}(S)$, is defined as the set of all convex combinations of points in $S$ and turns out to be the smallest convex set containing $S$. Thus, a set is convex if and only if it is equal to its convex hull. This characterization leads to new ways of defining measures of non-convexity through, for instance, the Hausdorff distance or the difference (in measure, surface area, etc.) between the set and its convex hull. Caratheodory's Theorem  states that, given  $S \subset \mathbb R^d$,  every point in $\text{co}(S)$  can be written as a convex combination of no more than $d+1$ points in $S$ \citep{carat}. In particular, every point in $\text{co}(S)$ lies in an $d$-dimensional simplex with vertices in $S$, that is, in the convex hull of $d+1$ affinely independent points in $S$. We can use this result as starting point for the definition of a new measure of non-convexity, based on the probability that the convex hull of $d + 1$ randomly chosen points in $S$ is contained in $S$.

Some of these ideas have already been studied in the fields of computer vision and pattern recognition, where shape descriptors are of particular importance, being convexity one of the most widely used. Some previous works introduce different measures of non-convexity for polygons, such as the polygonal entropy \citep{ste1989}, the simple and total index of non-convexity  \citep{box1993}, or the area-based and perimeter-based measures of non-convexity that compare, respectively, the area and perimeter of the set with those of its convex hull \citep{zun2004}. For the case of planar sets, \citep{rahtu2006} define a measure of non-convexity as the probability that a point which lies on the line segment between two random points from the set is also contained in the set. This idea can be generalized to convex combinations of three or more points. Also for planar shapes, but with techniques that can be applied to higher dimensions, \citep{Rosin_2007} propose a new measure of non-convexity that incorporates both area-based and boundary-based information. It is defined as the probability that for two random points, chosen uniformly along the boundary of the set, all points from the line segment connecting them belong to the set. 
The notion of convexity is also related to that of visibility, that has been used extensively in the area of computational geometry \citep{rou1997}. Using the terminology of the field, a set is convex if every two points in it are visible to each other (or mutually visible), meaning that the line segment connecting them is contained in the set. Thus, \citep{ste1989} defines a new measure of non-convexity for a polygon as the normalized average visible area.  This measure, also known as {\emph{Beer index of convexity}}, was first introduced for measurable sets in $\mathbb{R}^d$ by \citep{beer1973}. The author was interested in the generalizations of convex and starshaped sets in terms of the so-called visibility function.  \citep{bal2017} also consider the Beer index of convexity and higher-order generalizations.

These indices have relevant applications in a variety of real data problems, for example in medical imaging  \citep{lee2005, lee2003}. In particular, computational systems for the automatic diagnosis of skin lesions from dermatoscopic images use several combined image features as inputs  \citep{oliveira2019}. Among these features, measures of border irregularity (including some of the aforementioned measures of non-convexity   \citep{rahtu2006,rosin2009}) are generally considered. In practice, these measures are approximated from digital images using image processing tools or Monte Carlo methods (as in the case of measures based on probabilistic ideas  \citep{rahtu2006, Rosin_2007}).  Despite the rich literature on the subject, as far as we know, no prior studies have examined the problem of the estimation of these measures from a statistical point of view. The aim of this work is to propose estimators for some of these measures of non-convexity of a set, based on a random sample of points taken into it, and investigate their theoretical properties.

Another way of measuring how far a set $S\subset\mathbb{R}^d$  is from being convex, that has been considered mainly in the literature on discrete mathematics, through the so-called convexity number, $\gamma(S)$, defined as the least number of convex subsets of $S$ that covers $S$, see \citep{koj1990}. The convexity number has been studied in connection with other two measures of non-convexity defined from the invisibility graph of $S$.
The invisibility graph of $S$, denoted by $I(S)$, is a (possible infinite) graph that has a node for every point of $S$ and an edge for every pair of points that are not visible to each other. 
Thus, we can define the clique number of a set, $\omega(S)$, and the chromatic number of a set, $\chi(S)$, as the clique number and chromatic number of its invisibility graph, respectively. 
Recall that a clique of a graph is a complete subgraph, that is, a subset of the vertices such that every two vertices are adjacent. The clique number of a graph is the number of vertices in a maximum clique (a clique having maximum number of vertices). The chromatic number of a graph is the smallest number of colours needed to colour the vertices, so that no two adjacent vertices have the same colour \citep{skiena}. 
We have that $\omega(S)\leq \chi(S)\leq \gamma(S)$. Bounds of $\gamma(S)$ from above by functions of $\omega(S)$ and $\chi(S)$ are discussed in detail in \citep{cibulka} and references therein. It is easy to derive the consistency of these two discrete measures based on a random sample of points in $S$ when they are assumed to be finite.


\

\noindent{\emph{Measures of non-convexity of functions.}} Since a function $f: \mathbb{R}^d \to \mathbb R$ is convex if and only if its epigraph is a convex subset of $\mathbb{R}^{d+1}$, the measures of non-convexity for sets extend naturally to measures of non-convexity for functions. In any case, we can also define measures of non-convexity for functions directly from the notion of convex function. 

\

\noindent{\emph{Contributions of this work.}} As mentioned above, and despite the considerable literature on measures of non-convexity, we are not aware of previous works dealing with the estimation of these measures from a statistical point of view. The main objective here is to propose methods for estimating some of the most widely used measures of non-convexity of sets based on a random sample and to study the properties of these estimators. More specifically, after giving some geometrical background (Section~\ref{sec-back}) and formal definitions (Section \ref{mconv}),  we derive theoretical results for the estimation of four  different measures of non-convexity  (Section~\ref{sec:est}). 
One of those measures, $\mathcal L(S)$, is a novel proposal based on the probability that the convex hull of $d + 1$ randomly chosen points in $S$ is contained in $S$. We also discuss the practical implementation of the proposed estimators (Section~\ref{sec:imp}) and illustrate the applicability of the proposed estimation methods to a real data example (Section~\ref{sec:dat} and supplementary material). Finally, we provide some concluding remarks (Section~\ref{sec:con}). All proofs are given in the Appendix.

\section{Some geometric background}\label{sec-back}

This section is devoted to make explicit the notations, basic concepts and definitions 
(mostly of geometric character) that we will need in the rest of the paper.

\

\noindent\textit{Some notation}. Let us consider the $d$-dimensional Euclidean space $\mathbb{R}^d$ with the Euclidean norm $\Vert\cdot\Vert$. Given a set $S\subset \mathbb{R}^d$, we will denote by
$\mathring{S}$, $\overline{S}$, $S^c$ and $\partial S$ the interior, closure, complementary and boundary of $S$,
respectively, with respect to the usual topology of $\mathbb{R}^d$. Given a finite set $S$, we denote by $\#S$ the number of elements in $S$.
The parallel set of $S$ of radii $\varepsilon$ will be denoted as $B(S,\eps)$, that is
$B(S,\eps)=\{y\in{\mathbb R}^d:\ \inf_{x\in S}\Vert y-x\Vert\leq \eps \}$.
Given $\lambda>0$, we denote $\lambda S=\{\lambda s: s\in S\}$.
We denote $\text{diam}(S)=\sup_{x,y\in S}||x-y||$ the diameter of $S$.
If $A\subset\mathbb{R}^d$ is a Borel set, then $\mu_d(A)$ (sometimes just $\mu(A)$) will denote 
its Lebesgue measure.
We will denote by $B(x,\varepsilon)$ the closed ball
in $\mathbb{R}^d$,
of radius $\varepsilon$, centered at $x$. 
The convex hull of a set $S$ is denoted by $\text{co}(S)$. Given two points $x,y\in \mathbb{R}^d$, we denote by $\overline{xy}$ the closed segment joining them.

\

\noindent\textit{Set estimation}. Taking into account that our objective is to define estimators for certain measures of non-convexity of an unknown set $S$, based on a random sample of points supported on it, it seems natural to consider the use of plug-in type estimators. Thus, if we are able to approximate $S$ by an adequate set estimator $S_n$, we might approximate a measure of non-convexity of $S$ by the corresponding measure of non-convexity of $S_n$. We will follow this approach in some cases and, therefore, we will need some concepts from set estimation theory.

\begin{definition}\label{def-stand}  A set $S\subset \mathbb{R}^d$ is said to be standard with respect to a
	Borel measure $\nu$ in a point $x$ 	if there exists $\lambda>0$ and $\delta>0$ such that
	\begin{equation} \label{estandar}
		\nu(B(x,\eps)\cap S)\geq \delta \mu_d(B(x,\eps)),\quad 0<\eps\leq \lambda.
	\end{equation}
	A set $S\subset \mathbb{R}^d$ is said to be standard if \eqref{estandar} hold for all $x\in S$.
\end{definition}

\begin{definition}\label{rhull}
A closed set $S$ is said to be  $r$-convex if it can be expressed as the intersection of a family of complements of balls with radius $r>0$. More precisely, $S$ is $r$-convex if and only if 
\begin{equation}
	S=\bigcap_{\{y:\,B(y,r)\cap S=\emptyset\}}B(y,r)^c.\label{rconv}
\end{equation}
\end{definition}

An $r$-convex support $S$ can be estimated, from a random sample of points drawn on $S$, using the so-called $r$-convex hull of the sample. We refer to  \citep{cue12} for further results on the estimation of an $r$-convex support as well as some insights  regarding the comparison of $r$-convexity with other better known properties such as the outer $r$-sphere property. In particular, $r$-convexity is shown to be slightly stronger than the ``rolling'' outer $r$-sphere property: for every point in the boundary of $S$ there exists a ball touching that point whose interior is  included in $S^c$. Other generalizations of convexity were also studied, 
  like, for instance,  the cone-convexity \citep{chola14}, defined as follows.

\begin{definition}\label{rccc}
	A closed set $S\subset\mathbb{R}^d$ is said to be $\rho$,$h$-cone-convex by complement with parameters $\rho\in(0,\pi]$, $h>0$ if and only if
	\begin{equation}
		S=\bigcap_{\{y:\,C_{\rho,h}(y)\cap S=\emptyset\}}C_{\rho,h}(y)^c,\label{rhoccc}
	\end{equation}
where $C_{\rho,h}(y)$ denotes a finite cone with vertex at $y$, {opening angle $\rho$ and height $h$}.
\end{definition}

A $\rho$,$h$-cone-convex by complement support $S$ can be estimated, from a random sample of points drawn on $S$, using the so-called $\rho$,$h$-cone-convex hull by complement of the sample. We refer to  \citep{chola14} for further results on the estimation of a $\rho$,$h$-cone-convex by complement support and other related shape restrictions.

Some of the measures of non-convexity described in the literature make use of the Hausdorff distance or the distance in measure between sets. These distances are also used to evaluate the performance of set estimators. Let $A,C\subset \mathbb{R}^d$ be non-empty and compact. The Hausdorff distance between $A$ and $C$ is $d_H(A,C)=\max\{\sup_{a\in A}d(a,C), \ \sup_{c\in C}d(c,A)\}$. The distance in measure between  $A$ and $C$ is defined as the Lebesgue measure of their symmetric difference, that is, $d_\mu(A,C)=\mu_d(A\setminus C)+\mu_d(C\setminus A)$.

\section{Measures of non-convexity of sets} \label{mconv}
In what follows $S$ is assumed to be a non-empty, compact subset of $\mathbb{R}^d$. We give here the formal definitions of the four measures of non-convexity that we will study in detail throughout the paper. 
The definitions follow directly from the standard characterizations of convexity. Generally speaking, these measures are of geometric nature. 

\begin{enumerate}
\item First, we consider a relative version of the Hausdorff distance between the set and its convex hull.
	\begin{equation} \label{dindex}
		\mathcal{D}(S)=\frac{\sup_{s\in \text{co}(S)}d(s,S)}{\text{diam}(\text{co}(S))}.
	\end{equation}
As it happens with the Hausdorff distance between two sets, if only one point is moved far from a convex set $S$, the measure of non-convexity of the new set captures this change. 
Some properties of this index (where it is introduced without dividing by $\text{diam}(\text{co}(S)$) are studied in \citep{fradelizi2018}.
\item A relative version of the the distance in measure between the set and its convex hull is given by
	\begin{equation} \label{mindex}
		\mathcal{M}(S)= \frac{\mu_d(\text{co}(S) \setminus S)}{\mu_d(\text{co}(S))}.
	\end{equation}
	A similar index is introduced in Definition 2 in \citep{zunic2002}, where   $\mu_d(\text{co}(S) \setminus S)$ is replaced by $\mu_d(S)$, see also \citep{fradelizi2018}.
\item An equivalent condition for convexity is to ask that for all $x,y\in S$, its middle point $(x+y)/2\in S$.  Then, we can define  
	\begin{equation} \label{windex}
		\mathcal{W}(S)= 1 - \mathbb{P}((X_1+X_2)/2\in S),
	\end{equation}
being $X_1,X_2$ iid, uniformly distributed on $S$. The index $\mathcal{W}(S)$ is  a particular case of the index $\mathcal{C}_{\alpha}(S)=\mathbb{P}((\alpha X_1+(1-\alpha)X_2)\in S)$ introduced in  \citep{rahtu2006}, (observe that $\mathcal{W}(S)=1-\mathcal{C}_{1/2}(S)$).
\item  A new measure of non-convexity is defined as follows. Let $X_1,\dots,X_{d+1}$ iid, uniformly distributed on  $S$, we define
	\begin{equation} \label{LS}
	\mathcal{L}(S)=1- \E\Bigg[\frac{\mu_d(\Delta(X_1,\dots,X_{d+1})\cap S)}{\mu_d(\Delta(X_1,\dots,X_{d+1}))}\Bigg],
	\end{equation} 
	where $\Delta(X_1,\dots,X_{d+1})$ is the $d$-dimensional (open) simplex defined by $X_1,\dots,X_{d+1}$. Observe that, if $S$ is convex, then $\mathcal{L}(S)=0$.
\end{enumerate}

\

The following properties follow easily from the definitions. Those referring to $\mathcal{D}(S)$ and $\mathcal{M}(S)$ are a direct consequence of Lemmas 2.4 and 2.5 in \citep{fradelizi2018}.

\begin{proposition} \label{rem}\ Let $S$ be compact subset of $\mathbb{R}^d$.

	\begin{enumerate}
 		\item Measures $\mathcal{D}$, $\mathcal{M}$, $\mathcal{W}$ and $\mathcal{L}$ take values in $[0,1]$.
 		\item  $\mathcal{D}(S)=0$ if and only if $S$ is convex.
 		\item Under the assumption that $co(S)$ has nonempty interior,  $\mathcal{M}(S)=0$ if and only if $S$ is convex.
 		\item  In the class of closed subsets of $\mathbb{R}^d$ it holds that $\mathcal{W}(S)=0$ if and only if $S$ is convex,  except for a set of measure zero.
 		\item  In the class of closed subsets of $\mathbb{R}^d$ it holds that $\mathcal{L}(S)=0$ if and only if $S$ is convex,  except for a set of measure zero.
		\item  Measures $\mathcal{D}$, $\mathcal{M}$, $\mathcal{W}$ and $\mathcal{L}$ are translation and scale invariant.
	\end{enumerate} 
\end{proposition}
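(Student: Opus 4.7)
The plan is to dispatch the six assertions in the order stated, treating items (1), (2), (3), (6) as short verifications and focusing on (4)--(5). For (1), the containment $S\subseteq\text{co}(S)$ does all the work: any $s\in\text{co}(S)$ and any fixed $x_0\in S$ satisfy $\|s-x_0\|\leq\text{diam}(\text{co}(S))$, bounding the numerator of $\mathcal{D}$; the same inclusion gives $\mu_d(\text{co}(S)\setminus S)\leq\mu_d(\text{co}(S))$; and $\mathcal{W},\mathcal{L}$ are $1$ minus a probability or expectation taking values in $[0,1]$. Claim (2) follows because $\mathcal{D}(S)=0$ forces $d(s,S)=0$ for every $s\in\text{co}(S)$, hence $\text{co}(S)\subseteq\overline{S}=S$. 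For (3), if $\mathcal{M}(S)=0$ and some $x\in\text{co}(S)\setminus S$ existed, closedness of $S$ would give $B(x,\varepsilon)\cap S=\emptyset$; if $x\in\mathring{\text{co}(S)}$ this already contradicts $\mu_d(\text{co}(S)\setminus S)=0$, and if $x\in\partial\,\text{co}(S)$ I slide along a segment from $x$ to an interior point of $\text{co}(S)$ (which exists by hypothesis) to obtain an interior point $z$ of $\text{co}(S)$ arbitrarily close to $x$, still with $B(z,\varepsilon/2)\cap S=\emptyset$, giving the same contradiction. Claim (6) is immediate: if $X_i$ is uniform on $S$, then $\lambda X_i+t$ is uniform on $\lambda S+t$, and both numerator and denominator of $\mathcal{D}$ and $\mathcal{M}$ scale by the same power of $\lambda$.

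The heart of the proof is (4). Forward direction: if $\mu_d(S\triangle C)=0$ for some convex $C$, the uniform distributions on $S$ and on $C$ coincide, so $(X_1+X_2)/2\in C$ almost surely, and by absolute continuity of the density of $(X_1+X_2)/2$ (a convolution of two a.c.\ densities) the null set $C\setminus S$ is hit with probability zero. For the converse I introduce the measure-theoretic support
\begin{equation*}
\tilde{S}=\{x\in\mathbb{R}^d:\mu_d(B(x,r)\cap S)>0\ \text{for every }r>0\},
\end{equation*}
which is closed, contained in $S$, and satisfies $\mu_d(S\setminus\tilde{S})=0$ by the Lebesgue density theorem. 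I claim $\tilde{S}$ is midpoint-convex: given $x_1,x_2\in\tilde{S}$, write $z=(x_1+x_2)/2$ and $U_i=B(x_i,r)\cap S$, so $\mu_d(U_i)>0$. The linear change of variables $(y_1,y_2)\mapsto(u,v)=((y_1+y_2)/2,(y_1-y_2)/2)$ has constant Jacobian $2^d$, and Fubini applied on the target $u$-coordinate turns the positive $2d$-measure of $U_1\times U_2$ into a positive $d$-measure set of midpoints inside $B(z,r)$; the hypothesis $\mathcal{W}(S)=0$ places almost every such midpoint in $S$, so $\mu_d(B(z,r)\cap S)>0$ for every $r>0$, i.e.\ $z\in\tilde{S}$. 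Since a closed midpoint-convex set is convex (by a dyadic density argument), $\tilde{S}$ is the required convex set. For (5) I reduce to (4): $\mathcal{L}(S)=0$ forces $\mu_d(\Delta(X_1,\dots,X_{d+1})\setminus S)=0$ almost surely, and since $\Delta$ is open and $S$ is closed the open null set $\Delta\cap S^c$ must be empty, so $\overline{\Delta}\subseteq S$; since $\overline{X_1X_2}\subseteq\overline{\Delta}$, integrating out the remaining $d-1$ vertices yields $(X_1+X_2)/2\in S$ for almost every pair $(X_1,X_2)\in S\times S$, hence $\mathcal{W}(S)=0$ and (4) concludes. The forward direction of (5) is again the absolute-continuity argument.

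The main obstacle is the Fubini step in the converse of (4): a purely measure-theoretic hypothesis on midpoints has to be upgraded to a genuinely geometric (convexity) conclusion, and the key technical point is that the midpoint map $(y_1,y_2)\mapsto(y_1+y_2)/2$, although not injective, is a linear submersion transverse to a complementary coordinate, so a positive $2d$-measure subset of its domain projects to a positive $d$-measure subset of $B(z,r)\cap S$. Once that lemma is in place, everything else in the proposition reduces to routine closedness, density, and scaling manipulations.
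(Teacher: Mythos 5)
Your proof is correct, and it is worth noting that the paper itself does not really prove this proposition: it states that the properties ``follow easily from the definitions'' and defers the items concerning $\mathcal{D}$ and $\mathcal{M}$ to Lemmas 2.4 and 2.5 of Fradelizi et al.\ (2018), leaving (1), (4), (5), (6) without argument. Your handling of (1), (2), (3) and (6) is exactly the routine verification that remark presupposes (inclusion $S\subseteq \mathrm{co}(S)$ for the bounds, closedness of $S$ plus the interior-point sliding argument for (3), affine equivariance for (6)). The genuine added value is your converse of (4), which is \emph{not} a one-line consequence of the definitions: upgrading the measure-theoretic hypothesis $\mathbb{P}((X_1+X_2)/2\in S)=1$ to the existence of a convex set equal to $S$ up to a null set needs a device, and your choice --- pass to the measure-theoretic support $\tilde S$, prove it is midpoint-convex via the linear change of variables $(u,v)=((y_1+y_2)/2,(y_1-y_2)/2)$ and Fubini (so that a positive-measure set of pairs yields a positive-measure set of midpoints in $B(z,r)\cap S$), then use closedness to pass from midpoint-convexity to convexity --- is complete and clean. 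The reduction of (5) to (4), using that the open null set $\Delta\cap S^c$ must be empty so that $\overline{\Delta}\subseteq S$ and in particular the midpoint of $X_1,X_2$ lies in $S$, is also correct. Two small points you should make explicit: everything involving $\mathcal{W}$ and $\mathcal{L}$ (and the converse of (4)) tacitly assumes $\mu_d(S)>0$, since otherwise the uniform law on $S$ is undefined; and in (5) one should record that $d+1$ iid points drawn from an absolutely continuous law are almost surely affinely independent, so that $\mu_d(\Delta)>0$ and the ratio defining $\mathcal{L}$ is well defined almost surely.
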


\subsection{Why consider different measures of non-convexity?}
There is no single best measure of non-convexity, and depending on which kind of convexity departures we are interested in controlling, one measure may be more appropriate than another. We briefly discuss this issue through the sets shown in Figure \ref{dibujos2}. 
The set (a) is close to be convex with respect to all measures $\mathcal{D}(S)$, $\mathcal{M}(S)$, $\mathcal{W}(S)$  and $\mathcal{L}(S)$. If we compare the sets (b) and (c), they have the same value of  $\mathcal{D}(S)$. Therefore, these two sets are indistinguishable in terms of non-convexity with respect to that measure. This is not the case if we consider, for instance, the measure $\mathcal{M}(S)$, that will be larger for the set in (c). The set shown in (d) is close to be convex with respect to $\mathcal{W}(S)$ or $\mathcal{L}(S)$ but far from being convex with respect to $\mathcal{D}(S)$ or $\mathcal{M}(S)$ .

\begin{figure}[!ht]
\begin{tikzpicture}[x=0.18cm,y=0.18cm]

\hspace{1cm}
\node[color=black] (c) at (0,7)  {$(a)$};
\filldraw[color=black, fill=lightgray](0,0) circle (4);
\filldraw[color=black, fill=white](0,0) circle (0.2);

\hspace{3cm}
\node[color=black] (c) at (0,7)  {$(b)$};
 \ngram{4}{8}{0}{black,fill=lightgray}
\draw[lightgray, fill=lightgray] (0,0) --  (67.5:4) arc(67.5:382.5:4) -- cycle;
\draw[black]  (67.5:4) arc(67.5:382.5:4) ;

\hspace{3cm}
\node[color=black] (c) at (0,7)  {$(c)$};
\ngram{4}{8}{0}{black,fill=lightgray} 

\hspace{3.5cm}
\node[color=black] (c) at (0,7)  {$(d)$};
 \filldraw[color=lightgray, fill=lightgray](0,0) circle (4);
  \draw[color=black, fill=lightgray] (3.99756, 0.139598)-- (10,0)--(3.99756, -0.139598);
  \draw[color=black, fill=lightgray] (-3.99756, 0.139598)-- (-10,0)--(-3.99756, -0.139598);
  \draw[black]  (2:4) arc(2:178:4) ;
  \draw[black]  (182:4) arc(182:358:4) ;
\end{tikzpicture}
\caption{Examples of non-convex sets}
 \label{dibujos2}
\end{figure}
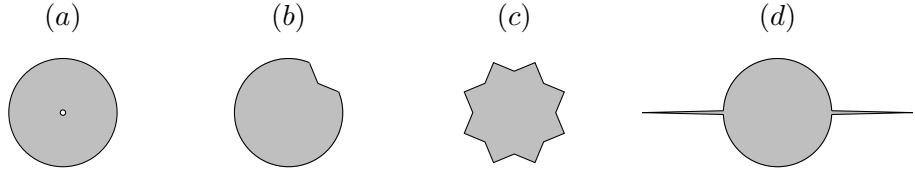

We also discuss the different behavior of the measures of non-convexity with a more detailed example. Let us consider the sets:
\begin{align*}
A_1(t)=& [-1,1]\times [-1,1] \setminus [-t,t]\times [-t,t]  \quad  \textrm{with  $t \in [0,0.9],$}\\
A_2(t)=& \left\{
 \begin{array}{ll}
 [-1,1]\times [-1,1],&{\textnormal{if}} \ \ t=0\\
 {[-1,1]\times [-1,1]} \setminus \{ (x,y):\vert y \vert \leq (x-1)/t +1  \},&{\textnormal{if}} \ \  t \in (0,2]
 \end{array}\right.\\
A_3(t)=& [-1/2,1/2]\times [-1/2,1/2] + ( \pm(t+1/2), \pm (t+1/2))	  \quad  \textrm{with  $t \in [0,1].$}
\end{align*}
Note that, for $t=0$, all sets coincide with a square of side two. In Figure \ref{comparison} we show the sets $A_i(t)$ for $t=1/2$, $i=1,2,3$. It can be proved that $\mathcal{D}(A_1(t))=t/\sqrt{8}$, $\mathcal{D}(A_2(t))=t/\sqrt{8(1+t^2)}$ and $\mathcal{D}(A_3(t))=t/(2(1+t))$. We also have $\mathcal{M}(A_1(t))=t^2$, $\mathcal{M}(A_2(t))=t/4$ and $\mathcal{M}(A_3(t))=1-1/(1+t)^2$. In Figure \ref{comparison2}, we represent $\mathcal{D}(A_i(t))$ and $\mathcal{M}(A_i(t))$ (solid and dashed line, respectively) for $i=1,2,3$. Regarding the computation of $\mathcal{W}(A_i(t))$ and $\mathcal{L}(A_i(t))$, we approximate their values by a Monte Carlo method, see Figure \ref{comparison2} (dotted and dot-dashed line, respectively).
We observe that the behavior of the measures of non-convexity varies depending on the set considered and we cannot establish, in general,  a relation of order between them. As expected, the measure $\mathcal{D}$ is more sensitive to small changes in the shape of a set than $\mathcal{M}$, as long as those changes do not substantially affect the measure of the set. Moreover, although a priori we might think that the probabilistic measures $\mathcal{W}$ and $\mathcal{L}$ have a similar behavior, this is not always the case. Note, for example, that for the set $A_3(t)$, the measure $\mathcal{W}(A_3(t))$ is constant ($\mathcal{W}(A_3(t))=3/4$) for $t\geq 1/2$, whereas $\mathcal{L}(A_3(t))$ keeps increasing as a function of $t$, thus making the sets distinguishable in terms of non-convexity.

	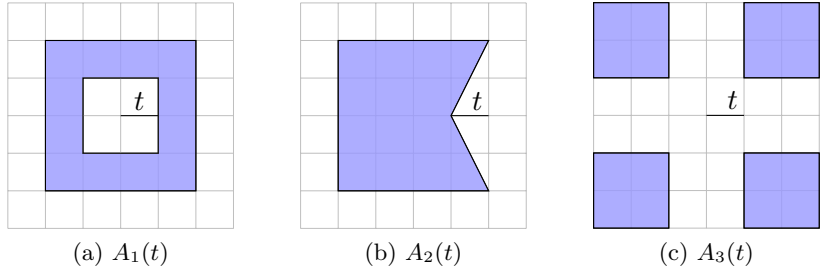
\begin{figure}[!ht]
	\vspace{0.5cm}
        \centering
        \subfloat[$A_1(t)$]{
        \begin{tikzpicture}
        \draw[step=0.5cm,gray!50,very thin] (-1.5,-1.5) grid (1.5,1.5); 
       \def\t{0.5}
       \draw[fill=blue!40, opacity=0.8] (-1,-1)--(1,-1)--(1,1)--(-1,1)-- cycle;
        \draw  (-1,-1)--(1,-1)--(1,1)--(-1,1)-- cycle;
       \draw[fill=white] (-\t,-\t)--(\t,-\t)-- (\t,\t)--(-\t,\t) -- cycle;
        \draw[step=0.5cm,gray!50,very thin] (-0.5,-0.5) grid (0.5,0.5);
         \draw(-\t,-\t)--(\t,-\t)-- (\t,\t)--(-\t,\t) -- cycle; 
        \coordinate (A) at (0,0);
\coordinate (B) at (\t,0);
\draw (A) -- (B);
\draw (\t*0.5,0.2) node {$t$};
        \end{tikzpicture}
        }\hspace{0.5cm}
        \subfloat[$A_2(t)$]{
        \begin{tikzpicture}
      \draw[step=0.5cm,gray!50,very thin] (-1.5,-1.5) grid (1.5,1.5); 
       \def\t{0.5}
       \draw[fill=blue!40, opacity=0.8] (-1,-1)--(1,-1)--(1-\t,0)--(1,1)--(-1,1)-- cycle;
       \draw (-1,-1)--(1,-1)--(1-\t,0)--(1,1)--(-1,1)-- cycle;
       \coordinate (B) at (\t,0);
             \coordinate (A) at (1,0);
\draw (A) -- (B);
\draw (\t+0.35,0.2) node {$t$};
        \end{tikzpicture}
        }\hspace{0.5cm}
        \subfloat[$A_3(t)$]{
        \begin{tikzpicture}
       \draw[step=0.5cm,gray!50,very thin] (-1.5,-1.5) grid (1.5,1.5); 
       \def\t{0.5}
       \draw[fill=blue!40, opacity=0.8] (0.5,0.5)--(1.5,0.5)--(1.5,1.5)--(0.5,1.5)-- cycle;
       \draw[fill=blue!40, opacity=0.8] (0.5,-1.5)--(1.5,-1.5)--(1.5,-0.5)--(0.5,-0.5)-- cycle;
       \draw[fill=blue!40, opacity=0.8] (-1.5,0.5)--(-0.5,0.5)--(-0.5,1.5)--(-1.5,1.5)-- cycle;
       \draw[fill=blue!40, opacity=0.8] (-1.5,-1.5)--(-0.5,-1.5)--(-0.5,-0.5)--(-1.5,-0.5)-- cycle;
       \draw (0.5,0.5)--(1.5,0.5)--(1.5,1.5)--(0.5,1.5)-- cycle;
       \draw (0.5,-1.5)--(1.5,-1.5)--(1.5,-0.5)--(0.5,-0.5)-- cycle;
       \draw(-1.5,0.5)--(-0.5,0.5)--(-0.5,1.5)--(-1.5,1.5)-- cycle;
       \draw (-1.5,-1.5)--(-0.5,-1.5)--(-0.5,-0.5)--(-1.5,-0.5)-- cycle;
              \coordinate (B) at (0,0);
             \coordinate (A) at (0.5,0);
\draw (A) -- (B);
\draw (0.35,0.2) node {$t$};
        \end{tikzpicture}
        }
        \caption{In blue, sets $A_1(t)$, $A_2(t)$ and $A_3(t)$ fot $t=1/2$.}
        \label{comparison}
    \end{figure}

  \begin{figure}[!ht]

 \centering
 \includegraphics[width=\textwidth]{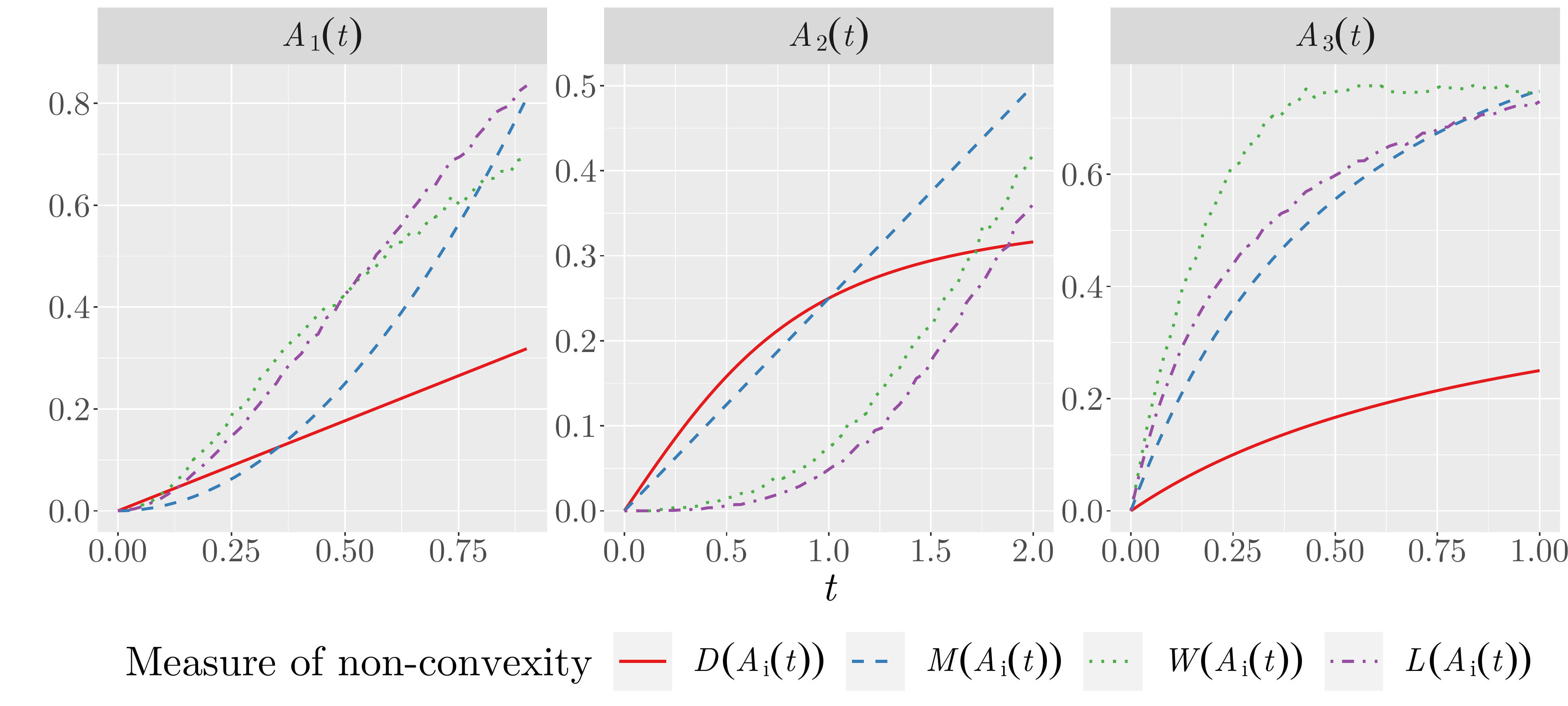}
 \caption{Measures of non-convexity for the sets $A_i(t)$, $i=1,2,3$, for different values of $t$. The measures $\mathcal{D}(A_i(t))$ and $\mathcal{M}(A_i(t))$ are calculated exactly. For each value of $t$, the measure $\mathcal{W}(A_i(t))$ is approximated by Monte Carlo, based on $5000$ pairs of uniform points drawn on $A_i(t)$.  For each value of $t$, the measure $\mathcal{L}(A_i(t))$ is approximated by Monte Carlo, based on $5000$ triples of uniform points drawn on $A_i(t)$.}\label{comparison2}
\end{figure}

 \section{Estimation of measures of non-convexity and theoretical results}\label{sec:est}
 
  Given a compact set $S \subset \mathbb R^d$, let $\aleph_n= \{X_1, \ldots, X_n\}$ be an iid sample with the same distribution $P_X$ as $X$ supported on $S$. We propose here estimators for the measures of non-convexity introduced in Section \ref{mconv}, based on the sample $\aleph_n$. We also give theoretical results on their consistency.
 
\subsection{Estimation of $\mathcal{D}(S)$}\label{estD}
 
Let us first consider the measure $\mathcal{D}(S)$, as defined in (\ref{dindex}). We propose to estimate $\mathcal{D}(S)$ by  
\begin{equation}\label{dest}
\mathcal{D}(\aleph_n)=\frac{\sup_{s\in \text{co}(\aleph_n)}d(s,\aleph_n)}{\text{diam}(\aleph_n)}.
\end{equation}
Note that we simply replace in (\ref{dindex}) the unknown set $S$ by the sample. It can be proved that $\mathcal{D}(\aleph_n)$ is a consistent estimator, as it is stated in the following theorem. 
  
\begin{theorem} \label{th1} Let $S$ be a compact set and $\aleph_n=\{X_1,\dots,X_n\}$ iid  of $X$ supported on $S$. Then, with probability one, for $n$ large enough,
	 	$$|\mathcal{D}(S)-\mathcal{D}(\aleph_n)|\leq \frac{3(1+\gamma)d_H(S,\aleph_n)}{\text{diam}(\text{co}(S))},$$
for all $\gamma>0$.
\end{theorem}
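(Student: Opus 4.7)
Write $\eta_n = d_H(S,\aleph_n)$, and set
$$N_1=\sup_{s\in\text{co}(S)}d(s,S),\qquad N_2=\sup_{s\in\text{co}(\aleph_n)}d(s,\aleph_n),\qquad D_1=\text{diam}(\text{co}(S)),\qquad D_2=\text{diam}(\aleph_n),$$
so that $\mathcal{D}(S)=N_1/D_1$ and $\mathcal{D}(\aleph_n)=N_2/D_2$. The plan is to control numerator and denominator separately, in both cases by a multiple of $\eta_n$, and then combine the two estimates with a standard ``add and subtract'' trick. Since $X$ has support $S$ and $S$ is compact, a covering argument gives $\eta_n\to 0$ almost surely, which is what allows the statement to hold for $n$ large enough.

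The first key observation is that the convex hull operation is $1$-Lipschitz for the Hausdorff distance, so $d_H(\text{co}(S),\text{co}(\aleph_n))\le \eta_n$, and $\text{co}(\aleph_n)\subset\text{co}(S)$. From this I get $|D_1-D_2|\le 2\eta_n$ for the diameters (using that $\text{diam}$ equals diameter of the convex hull). For the numerators I argue in two directions. Given $s\in\text{co}(S)$, pick $s'\in\text{co}(\aleph_n)$ with $\|s-s'\|\le\eta_n$; since $\aleph_n\subset S$ we have $d(s',S)\le d(s',\aleph_n)\le N_2$, hence $d(s,S)\le \eta_n+N_2$, giving $N_1\le N_2+\eta_n$. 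Conversely, for $s'\in\text{co}(\aleph_n)\subset\text{co}(S)$, take the closest $y^\ast\in S$ and then $x^\ast\in\aleph_n$ within $\eta_n$ of $y^\ast$; then $d(s',\aleph_n)\le d(s',S)+\eta_n\le N_1+\eta_n$, whence $N_2\le N_1+\eta_n$. Thus $|N_1-N_2|\le\eta_n$.

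Now I combine everything. Writing
$$\left|\frac{N_1}{D_1}-\frac{N_2}{D_2}\right|\le\frac{N_1|D_2-D_1|+D_1|N_1-N_2|}{D_1D_2}\le\frac{2N_1\eta_n+D_1\eta_n}{D_1D_2},$$
and using the trivial bound $N_1\le D_1$ (any sup distance from a point of $\text{co}(S)$ to $S\subset\text{co}(S)$ is at most the diameter), I obtain
$$|\mathcal{D}(S)-\mathcal{D}(\aleph_n)|\le\frac{3\eta_n}{D_2}.$$
Finally, since $\eta_n\to 0$ a.s. and $|D_1-D_2|\le 2\eta_n$, for every $\gamma>0$ and almost every sample path one has $D_1/D_2\le 1+\gamma$ eventually, which yields the desired bound.

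The only delicate point is the two-sided estimate $|N_1-N_2|\le\eta_n$, since $N_1$ and $N_2$ measure distances to different sets with the sup ranging over different convex hulls; but once the approximations are made in the right order (replace first the outer hull, then the inner set), it works cleanly. Everything else is standard Hausdorff-distance bookkeeping and an elementary difference-of-quotients manipulation.
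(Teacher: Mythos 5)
Your argument is correct and uses essentially the same ingredients as the paper's proof: the inclusion $\text{co}(\aleph_n)\subset\text{co}(S)$, the Lipschitz behaviour of the convex hull and diameter under Hausdorff distance, the bound $\sup_{s\in\text{co}(S)}d(s,S)\le\text{diam}(\text{co}(S))$ (i.e.\ $\mathcal{D}\le 1$), and $d_H(S,\aleph_n)\to 0$ a.s. The only difference is cosmetic: you package the two one-sided estimates of the paper into a single symmetric difference-of-quotients computation, which if anything is slightly cleaner.
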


 \begin{remark}\label{remD} The asymptotic distribution of $d_H(\aleph_n,S)$  can be found in \citep{penrose2021random}, when $\partial S$ is  $C^2$. This allows to obtain an asymptotic upper bound for $|\mathcal{D}(S)-\mathcal{D}(\aleph_n)|$, and also to test (at level asymptotically smaller than a given $\alpha$)  $H_0: S\text{ is convex }$, that is $\mathcal{D}(S)=0$,  against $H_1:S$ is not convex. 
 \end{remark}

\subsection{Estimation of $\mathcal{M}(S)$}\label{estM}
 	
Let us now consider the measure $\mathcal{M}(S)$, as defined in (\ref{mindex}). Let $S_n$ be an estimator of $S$ based on $\aleph_n$. We propose to estimate $\mathcal{M}(S)$ by 
\begin{equation}\label{mest}
\hat{\mathcal{M}}(S_n)=\frac{\mu_d(\text{co}(\aleph_n)\setminus S_n)}{\mu_d(\text{co}(\aleph_n))}.
\end{equation}
\begin{theorem}\label{consistM} Let $S\subset \mathbb{R}^d$ be compact and $\aleph_n$ an iid sample on $S$. Let $S_n$ be any sequence of sets such that $d_{\mu_d}(S,S_n)\to 0$  a.s. Then, $\hat{\mathcal{M}}(S_n)\to \mathcal{M}(S)$ a.s.
\end{theorem}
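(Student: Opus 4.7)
The plan is to work out the denominator and the numerator of $\hat{\mathcal{M}}(S_n)$ separately and then combine them. First I would establish that the convex hull of the sample approximates $\text{co}(S)$ in both Hausdorff and Lebesgue-measure senses. Under the standing (often implicit) assumption that $P_X$ charges every open subset of $S$, a finite $\varepsilon$-net argument combined with Borel--Cantelli yields $d_H(\aleph_n,S)\to 0$ almost surely. Since $A\mapsto \text{co}(A)$ is $1$-Lipschitz in the Hausdorff metric, $d_H(\text{co}(\aleph_n),\text{co}(S))\to 0$ a.s. Assuming $\mu_d(\text{co}(S))>0$ (otherwise $\mathcal{M}(S)$ is not well defined, cf.\ Proposition \ref{rem}(3)), continuity of Lebesgue measure on compact convex sets with nonempty interior — together with the nesting $\text{co}(\aleph_n)\subset\text{co}(S)$ — gives $\mu_d(\text{co}(\aleph_n))\to \mu_d(\text{co}(S))$ a.s., and in fact $d_{\mu_d}(\text{co}(\aleph_n),\text{co}(S))\to 0$ a.s.

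Next, to handle the numerator, I would rely on the elementary bound
\[
\bigl|\mu_d(A_1\setminus B_1)-\mu_d(A_2\setminus B_2)\bigr| \;\le\; 2\,d_{\mu_d}(A_1,A_2)+d_{\mu_d}(B_1,B_2),
\]
proved by writing $\mu_d(A\setminus B)=\mu_d(A)-\mu_d(A\cap B)$ and inserting the mixed term $\mu_d(A_1\cap B_2)$ in the intersection estimate. Setting $A_1=\text{co}(\aleph_n)$, $B_1=S_n$, $A_2=\text{co}(S)$, $B_2=S$, both right-hand-side terms vanish a.s.: the first from the preceding step, the second by hypothesis. Hence $\mu_d(\text{co}(\aleph_n)\setminus S_n)\to \mu_d(\text{co}(S)\setminus S)$ a.s.

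Combining the two convergences and invoking continuity of the quotient at $\mu_d(\text{co}(S))>0$ yields $\hat{\mathcal{M}}(S_n)\to \mathcal{M}(S)$ a.s. The most delicate point — and the main obstacle presentation-wise — is justifying $d_H(\aleph_n,S)\to 0$ and the attendant volume convergence of the convex hulls, since the statement gives hypotheses only on $S_n$; these facts are classical in set estimation under mild support assumptions on $P_X$, but they must be either cited or sketched explicitly so that the plug-in comparison between $\text{co}(\aleph_n)$ and $\text{co}(S)$ is rigorously available.
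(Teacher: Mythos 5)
Your proof is correct and follows essentially the same route as the paper's: both reduce the problem to the convergence $\mu_d(\mathrm{co}(\aleph_n))\to\mu_d(\mathrm{co}(S))$ (via $d_H(\aleph_n,S)\to 0$ a.s.\ under the support assumption, which the paper also uses implicitly by citing $d_{\mu}(\mathrm{co}(\aleph_n),\mathrm{co}(S))=\mathcal{O}(d_H(\aleph_n,S))$) and a symmetric-difference bookkeeping for the numerator, yours packaged as the clean inequality $|\mu_d(A_1\setminus B_1)-\mu_d(A_2\setminus B_2)|\leq 2\,d_{\mu_d}(A_1,A_2)+d_{\mu_d}(B_1,B_2)$ and the paper's as an equivalent two-step decomposition. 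The caveats you raise (the implicit hypothesis that $S$ is the support of $P_X$ and that $\mu_d(\mathrm{co}(S))>0$) are legitimate and equally present in the paper's argument.
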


\subsection{Estimation of $\mathcal{W}(S)$} \label{TG2}

Recall the definition of $\mathcal{W}(S)$ in (\ref{windex}).  Let $S_n$ be an estimator of $S$ based on $\aleph_n$. Let $(Z_{11},Z_{12}),\dots,(Z_{m1},Z_{m2})$ independent of $\aleph_n$, uniformly distributed on ${S}_n\times {S}_n$. We propose to estimate $\mathcal{W}(S)$ by
\begin{equation}\label{west}
\hat{\mathcal{W}}_m(S_n)=1-\frac{1}{m}\sum_{i=1}^m\mathbb{I}_{\{(Z_{j1}+Z_{j2})/2\in S_n\}}.
\end{equation}

We obtain the following result on the consistency of $\hat{\mathcal{W}}_m(S_n)$ when $S_n$ is the simple estimator defined by (see \citep{dev1980}),
\[{S}_n=B(\aleph_n,\eps_n)=\bigcup_{i=1}^nB(X_i,\eps_n).\]

\begin{theorem}\label{thtamura} Let $S\subset\mathbb{R}^d$ be a compact set such that $\mu_d(\partial S)=0$ and $\aleph_n=\{X_1,\dots,X_n\}$ iid uniformly distributed on $S$. 	
	Let $\eps_n\to 0$ be a sequence of positive real numbers and ${S}_n=B(\aleph_n,\eps_n)$.
Assume that $\eps_n$ is such that, with probability one, for $n$ large enough $S\subset{S}_n$. 	
Let $\Gamma_m:= \{(Z_{11},Z_{12}),\dots,(Z_{m1},Z_{m2})\}$ independent of  $\aleph_n$, uniformly distributed on ${S}_n\times {S}_n$. Then,
 ${\hat{\mathcal{W}}_m}({S}_n) \to \mathcal{W}(S)$, i.e.

$$\lim_{n \to \infty}\lim_{m \to \infty}\frac{1}{m}\sum_{j=1}^m  \mathbb{I}_{\{(Z_{j1}+Z_{j2})/2\in S_n\}}=\mathbb{P}((X_1+X_2)/2\in S),$$ 
with probability one. 	 
\end{theorem}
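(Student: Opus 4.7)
The plan is to handle the iterated limit in two stages: first let $m\to\infty$ with $n$ fixed, and then let $n\to\infty$. For the inner limit, condition on $\aleph_n$ so that the indicators $\mathbb{I}_{\{(Z_{j1}+Z_{j2})/2\in S_n\}}$, $j=1,\dots,m$, are iid Bernoulli. The strong law of large numbers then gives, almost surely,
$$\frac{1}{m}\sum_{j=1}^{m}\mathbb{I}_{\{(Z_{j1}+Z_{j2})/2\in S_n\}}\;\longrightarrow\;p_n:=\frac{\mu_{2d}(A_n)}{\mu_d(S_n)^2},$$
where $A_n=\{(u_1,u_2)\in S_n\times S_n:(u_1+u_2)/2\in S_n\}$. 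An analogous identity expresses $\mathbb{P}((X_1+X_2)/2\in S)=\mu_{2d}(A)/\mu_d(S)^2$ with $A$ defined using $S$ in place of $S_n$. Thus the proof reduces to showing $p_n\to \mu_{2d}(A)/\mu_d(S)^2$ almost surely as $n\to\infty$.

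The hypothesis $S\subset S_n\subset B(S,\eps_n)$ (valid a.s.\ for $n$ large, since $\eps_n\to 0$) yields pointwise convergence $\mathbb{I}_{S_n}(u)\to\mathbb{I}_S(u)$ for every $u\notin\partial S$: if $u\in S$ then $u\in S_n$ by hypothesis, and if $u\notin\overline S$ then $d(u,S)>0$ pushes $u$ out of $B(S,\eps_n)$ eventually. Since $\mu_d(\partial S)=0$, this gives $\mathbb{I}_{S_n}\to\mathbb{I}_S$ almost everywhere on $\mathbb{R}^d$. Combined with the uniform bound $\mathbb{I}_{S_n}\leq \mathbb{I}_{B(S,1)}$ for $n$ large, dominated convergence then delivers $\mu_d(S_n)\to\mu_d(S)$ almost surely, handling the denominator of $p_n$.

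For the numerator I would write $\mu_{2d}(A_n)=\int \mathbb{I}_{S_n}(u_1)\mathbb{I}_{S_n}(u_2)\mathbb{I}_{S_n}((u_1+u_2)/2)\,du_1\,du_2$ and apply DCT once more. The pointwise limit of the integrand can fail only when at least one of $u_1$, $u_2$, or $(u_1+u_2)/2$ belongs to $\partial S$. The first two exceptional sets are $\mu_{2d}$-null trivially; for the third, Fubini shows that $\{(u_1,u_2):(u_1+u_2)/2\in \partial S\}$ is negligible because each $u_2$-section equals $2\partial S-u_2$, which has $d$-measure zero. Dominated convergence thus gives $\mu_{2d}(A_n)\to\mu_{2d}(A)$ almost surely. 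Taking the ratio, $p_n$ converges almost surely to $\mathbb{P}((X_1+X_2)/2\in S)$, and the stated iterated limit follows, yielding $\hat{\mathcal{W}}_m(S_n)\to\mathcal{W}(S)$.

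The main technical hurdle is the Fubini step handling $\{(u_1,u_2):(u_1+u_2)/2\in \partial S\}$; the rest is a routine combination of SLLN and dominated convergence. The regularity assumption $\mu_d(\partial S)=0$ is used precisely at that step, while the assumption $S\subset S_n$ eventually is what makes the pointwise convergence and uniform domination simultaneously available for the product of three indicators.
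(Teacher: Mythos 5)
Your proof is correct, and it shares the paper's overall two-stage skeleton (condition on $\aleph_n$, apply the strong law to get the inner limit as the conditional probability $p_n=\mathbb{P}\big((Z_{n1}+Z_{n2})/2\in S_n\mid S_n\big)$, then send $n\to\infty$), but the mechanism you use for the outer limit is genuinely different. The paper argues probabilistically: it splits the event according to whether the midpoint falls in $S$ or in $S_n\setminus S$, conditions on $\{Z_{n1},Z_{n2}\in S\}$ (where, since $S\subset S_n$, the $Z$'s are exactly uniform on $S$, so that term equals $\mathbb{P}((X_1+X_2)/2\in S)$ up to the factor $\mathbb{P}(Z_{n1},Z_{n2}\in S)\to 1$), and kills the remainder by sandwiching $S_n\setminus S\subset B(S,\eps_n)\setminus S$ and letting $\eps_n\to 0$. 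You instead write $p_n$ as a ratio of Lebesgue integrals of products of indicators and invoke dominated convergence, with a Fubini argument to dispose of the set $\{(u_1,u_2):(u_1+u_2)/2\in\partial S\}$. Both are valid; your route is arguably tidier and avoids the slightly delicate conditioning bookkeeping in the paper's decomposition. One small remark: since $S$ is compact (hence closed) and $S\subset S_n\subset B(S,\eps_n)$, the pointwise convergence $\mathbb{I}_{S_n}(u)\to\mathbb{I}_S(u)$ actually holds for \emph{every} $u$ (points of $\partial S$ lie in $S$ and hence in $S_n$), so your exceptional-set/Fubini step, while correct, is not strictly needed; this also shows that in your argument the hypothesis $\mu_d(\partial S)=0$ is doing no work beyond what $S\subset S_n\subset B(S,\eps_n)$ already provides, whereas the paper uses it explicitly in the final step $\mathbb{P}((X_1+X_2)/2\in B(S,\eps_n)\setminus S)\to\mathbb{P}((X_1+X_2)/2\in\partial S)=0$.
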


\subsection{Estimation of $\mathcal{L}(S)$}

Our proposal to estimate $\mathcal{L}(S)$ is based on the theory of $U$-statistics. Let ${\mathcal{L}}_n(S)$ denote the $U$-statistic of degree $d+1$ associated  to $\mathcal{L}(S)$, that is, 
\begin{equation}\label{uest} 
{\mathcal{L}}_n(S)=1-\frac{1}{\binom{n}{d+1}} \sum_{1\leq i_1<\dots <i_{d+1}\leq n} \frac{\mu_d(\Delta(X_{i_1},\dots,X_{i_{d+1}}) \cap S) }{\mu_d(\Delta(X_{i_1},\dots,X_{i_{d+1}}))},
\end{equation}
where the sum is over all possible $d$-dimensional simplexes that can be constructed taking $d+1$ points in $\aleph_n$. 
Let $S_n$ be such that $d_{\mu}(S,S_n)\to 0$ a.s. Then, we propose to estimate $\mathcal{L}(S)$ by
\begin{equation}\label{lnhat}
{\hat{\mathcal{L}}_n}({S}_n)=1-\frac{1}{\binom{n}{d+1}} \sum_{1\leq i_1<\dots <i_{d+1}\leq n} \frac{\# \mathcal{R}(X_{i_1},\dots,X_{i_{d+1}})\mu_d({S}_n)}{(n-d-1)\mu_d(\Delta(X_{i_1},\dots,X_{i_{d+1}}))},
\end{equation} 
where 
$$\mathcal{R}(X_{i_1},\dots,X_{i_{d+1}})=\big[\aleph_n\setminus \{X_{i_1},\dots,X_{i_{d+1}}\}\big]\cap \Delta(X_{i_1},\dots,X_{i_{d+1}}).$$

\begin{remark}
In case $\mu_d(S)$ is known, $\mathcal{L}(S)$ can be estimated by ${\hat{\mathcal{L}}_n}({S})$, simply replacing in (\ref{lnhat}) the measure of the estimator $\mu_d(S_n)$ by $\mu_d(S)$. Note also that, if you have a consistent estimator of $\mu_d(S)$, to obtain ${\hat{\mathcal{L}}_n}({S})$, it is not necessary to determine $S_n$.

\end{remark}

\begin{remark}
Let $X_1,\dots,X_{d+2}$ uniformly distributed on a compact set $S$. The measure $\mathcal{L}(S)$ can be rewritten as,

$$\mathcal{L}(S)=1- \mu_d(S)\E\Bigg[\frac{ \mathbb{I}_{\left\{ X_{d+2} \in \Delta(X_1,\dots,X_{d+1}) \right \}}}{\mu_d(\Delta(X_1,\dots,X_{d+1}))}\Bigg].$$
The proof is straightforward since,  
\begin{eqnarray*}
\E\Bigg[\frac{ \mathbb{I}_{\left \{ X_{d+2} \in \Delta(X_1,\dots,X_{d+1}) \right \}}}{\mu_d(\Delta(X_1,\dots,X_{d+1}))}\Bigg] &=&  
  \E \left [ \E\left( \frac{\mathbb{I}_{\left \{ X_{d+2} \in \Delta(X_1,\dots,X_{d+1}) \right \} }}{\mu_d(\Delta(X_1,\dots,X_{d+1}))}\right) \Big | X_1,\dots,X_{d+1} \right ] \\
  &=&    \E \left [  \frac{ P_X \left ( \Delta(X_1,\dots,X_{d+1}) \right ) }{\mu_d(\Delta(X_1,\dots,X_{d+1}))}  \right ] \\
  &=&    \frac{1}{\mu_d(S)} \E \left [  \frac{ \mu_d  \left ( \Delta(X_1,\dots,X_{d+1}) \cap S \right ) }{\mu_d(\Delta(X_1,\dots,X_{d+1}))}  \right ]  
\end{eqnarray*}
Hence, another possible estimator for $\mathcal{L}(S)$ could be derived from the $U$-statistic of order $(d+2)$, 
$${\mathcal{L}_n^{(d+2)}}({S})= 1 - \mu_d \left ({S} \right) \frac{1}{\binom{n}{d+2}} \sum_{1\leq i_1<\dots <i_{d+2}\leq n} \frac{\mathbb{I}_{\left \{ X_{i_{d+2}} \in \Delta \left( X_{i_1},\dots,X_{i_{d+1}} \right) \right \} }}{\mu_d(\Delta(X_{i_1},\dots,X_{i_{d+1}}))}.$$
\end{remark}

\begin{theorem}\label{consistency} Let $S\subset \mathbb{R}^d$ be a compact set and $\aleph_n=\{X_1,\dots,X_n\}$ iid  of $X$ supported on $S$.  Let $S_n$ be an estimator of $S$ such that $d_{\mu}(S,S_n)\to 0$ a.s. Then,
	 $${\hat{\mathcal{L}}_n}({S}_n) \to \mathcal{L}(S), $$
in probability.
\end{theorem}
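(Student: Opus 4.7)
The plan is to recognize $\hat{\mathcal{L}}_n(S_n)$ as the product of a consistent estimator of $\mu_d(S)$ and a U-statistic of degree $d+2$ whose kernel has a mean that is exactly $(1-\mathcal{L}(S))/\mu_d(S)$, as suggested by the identity in the Remark preceding the theorem.

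First, I would factor $\mu_d(S_n)$ out of the sum in (\ref{lnhat}) and rewrite $\#\mathcal{R}(X_{i_1},\dots,X_{i_{d+1}})$ as $\sum_{j\notin\{i_1,\dots,i_{d+1}\}}\mathbb{I}_{X_j\in\Delta(X_{i_1},\dots,X_{i_{d+1}})}$, so that
\[
\hat{\mathcal{L}}_n(S_n)=1-\mu_d(S_n)\,\frac{1}{\binom{n}{d+1}(n-d-1)}\sum_{i_1<\dots<i_{d+1}}\sum_{j\notin\{i_1,\dots,i_{d+1}\}}\frac{\mathbb{I}_{X_j\in\Delta(X_{i_1},\dots,X_{i_{d+1}})}}{\mu_d(\Delta(X_{i_1},\dots,X_{i_{d+1}}))}.
\]
Each pair $(\{i_1,\dots,i_{d+1}\},j)$ is indexed by a $(d+2)$-subset $J$ and a distinguished element $k\in J$, and the identity $\binom{n}{d+1}(n-d-1)=(d+2)\binom{n}{d+2}$ lets me rewrite the double sum as
\[
U_n:=\frac{1}{\binom{n}{d+2}}\sum_{|J|=d+2}h(X_J),\qquad h(x_1,\dots,x_{d+2})=\frac{1}{d+2}\sum_{k=1}^{d+2}\frac{\mathbb{I}_{x_k\in\Delta(x_{[d+2]\setminus k})}}{\mu_d(\Delta(x_{[d+2]\setminus k}))},
\]
so $\hat{\mathcal{L}}_n(S_n)=1-\mu_d(S_n)\,U_n$ with $h$ a bona fide symmetric kernel of degree $d+2$.

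Next I would compute $\mathbb{E}[h]$ by following the calculation in the Remark: conditioning on the first $d+1$ arguments and using that $X_i$ is uniform on $S$ gives $\mathbb{E}[\mathbb{I}_{X_{d+2}\in\Delta}/\mu_d(\Delta)\mid X_1,\dots,X_{d+1}]=P_X(\Delta)/\mu_d(\Delta)=\mu_d(\Delta\cap S)/(\mu_d(S)\mu_d(\Delta))$, whence
\[
\mathbb{E}[h(X_1,\dots,X_{d+2})]=\frac{1}{\mu_d(S)}\,\mathbb{E}\!\left[\frac{\mu_d(\Delta(X_1,\dots,X_{d+1})\cap S)}{\mu_d(\Delta(X_1,\dots,X_{d+1}))}\right]=\frac{1-\mathcal{L}(S)}{\mu_d(S)}.
\]
In particular $\mathbb{E}|h|<\infty$ (the integrand is bounded in conditional expectation by $1/\mu_d(S)$, and the event that $d+1$ sample points are affinely dependent has probability zero), so Hoeffding's strong law for U-statistics yields $U_n\to (1-\mathcal{L}(S))/\mu_d(S)$ almost surely.

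Finally, the hypothesis $d_\mu(S,S_n)=\mu_d(S\triangle S_n)\to 0$ a.s. immediately gives $|\mu_d(S_n)-\mu_d(S)|\le\mu_d(S\triangle S_n)\to 0$, so $\mu_d(S_n)\to\mu_d(S)$ a.s. Combining with the previous step,
\[
\hat{\mathcal{L}}_n(S_n)=1-\mu_d(S_n)\,U_n\ \longrightarrow\ 1-\mu_d(S)\cdot\frac{1-\mathcal{L}(S)}{\mu_d(S)}=\mathcal{L}(S)
\]
almost surely, which is stronger than the claimed convergence in probability. The only genuinely delicate step I foresee is the symmetrization/combinatorial rewriting, because the given form of the estimator treats the simplex vertices and the probe point asymmetrically; everything else reduces to the Hoeffding SLLN for integrable U-statistic kernels plus the trivial monotonicity $|\mu_d(A)-\mu_d(B)|\le\mu_d(A\triangle B)$.
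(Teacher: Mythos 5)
Your argument is correct, and it takes a genuinely different route from the paper. You exploit the exact combinatorial identity $\binom{n}{d+1}(n-d-1)=(d+2)\binom{n}{d+2}$ to recognize $\hat{\mathcal{L}}_n(S_n)=1-\mu_d(S_n)U_n$ with $U_n$ a \emph{bona fide} symmetric $U$-statistic of degree $d+2$; its kernel is unbounded (small simplexes make $1/\mu_d(\Delta)$ blow up), but you correctly verify $\mathbb{E}|h|\le 1/\mu_d(S)<\infty$ by conditioning on the simplex vertices, so Hoeffding's SLLN applies and you even get almost sure convergence, which is stronger than the stated convergence in probability. The paper instead keeps the degree-$(d+1)$ oracle $U$-statistic $\mathcal{L}_n(S)$ (whose kernel is bounded, so the SLLN is immediate) and controls the plug-in error $\hat{\mathcal{L}}_n(S)-\mathcal{L}_n(S)$ by a uniform Vapnik--Chervonenkis deviation bound over the class of simplexes, truncating away simplexes of relative volume below a threshold $\delta_n\to 0$; this is heavier machinery but it delivers the quantitative statement $\sqrt{n}\,|\hat{\mathcal{L}}_n(S)-\mathcal{L}_n(S)|\to 0$ in probability, which is exactly what the paper reuses to transfer the Hoeffding CLT from $\mathcal{L}_n(S)$ to $\hat{\mathcal{L}}_n(S_n)$ in the asymptotic normality theorem. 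So your route is shorter and gives a stronger mode of convergence for this theorem, but it does not by itself supply the $\sqrt{n}$-negligibility needed later (a CLT for your degree-$(d+2)$ $U$-statistic would additionally require $\mathbb{E}[h^2]<\infty$, which is not obvious since $\mathbb{E}[h^2\mid X_1,\dots,X_{d+1}]$ involves $\mu_d(\Delta\cap S)/\mu_d(\Delta)^2$). Two minor points: like the paper's own proof, your computation of $\mathbb{E}[h]$ uses that the sample is uniform on $S$ with $\mu_d(S)>0$ (the theorem's wording ``supported on $S$'' is looser than what either argument actually uses), and you rightly note that affine dependence of $d+1$ sample points is a null event so the kernel is defined almost everywhere.
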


We also have the following rates of convergence for some special classes of sets.

\begin{remark}
Let us first assume that $S\subset\mathbb{R}^d$ with $d>1$ is a compact set such that both $S$ and $S^c$ are $r$-convex (see Definition \ref{rhull}). Let $S_n$ be the so-called $r$-convex hull of $\aleph_n$. Under the hypothesis of Theorem 3 in \citep{rod07}, it can be proved that a rate of order $(\log(n)/n)^{2/(d+1)}$ can be attained in Theorem \ref{consistency}. If we only assume that $S$ is $\rho,h$-cone convex by complements (see Definition \ref{rccc}) and we use as ${S}_n$ the so-called $\rho,h$-cone convex hull of $\aleph_n$, it follows from Theorem 6 in \citep{chola14} that a rate of order $(\log(n)/n)^{1/d}$ can be attained in Theorem \ref{consistency}.
\end{remark}

\subsubsection{Asymptotic normality of $\mathcal{L}(S)$}

Using the asymptotic theory for U-statistics and a Theorem about the convergence of the empirical processes dependent on one parameter it is possible to obtain the limiting distribution for $\hat{\mathcal{L}}_n$. It can be seen in Theorem \ref{asymptdist1} how the shape of the set $S$ impacts on the dispersion of the estimator.

\begin{theorem}\label{asymptdist1} Let $S\subset \mathbb{R}^d$ be a  non convex compact set, fulfilling $\overline{\mathring{S}}=S$. Let $\aleph_n=\{X_1,\dots,X_n\}$ be an iid sample,  uniformly distributed on $S$. 	 Then
\begin{equation}\label{tcl}
 \sqrt{n} \left( {\hat{\mathcal{L}}_n}({S}_n) - \mathcal{L}(S) \right) \to N(0, d^2 \xi_1)\quad \textrm{in distribution,}
 \end{equation}
with $\xi_1= \textrm{Var} \left( h_1(X_1) \right)$, $h_1(x_1)=\E \left \{ h(x_1, X_{2}, \ldots, X_{d+1}) \right\},$ and 
$$h(X_1,\ldots,X_{d+1})= 1-\frac{\mu_d(\Delta(X_1,\dots,X_{d+1})\cap S)}{\mu_d(\Delta(X_1,\dots,X_{d+1}))}.$$ 
\end{theorem}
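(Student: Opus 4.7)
The plan is to decompose
\begin{equation*}
\hat{\mathcal{L}}_n(S_n)-\mathcal{L}(S) \;=\; \bigl[\hat{\mathcal{L}}_n(S_n)-\hat{\mathcal{L}}_n(S)\bigr] + \bigl[\hat{\mathcal{L}}_n(S)-\mathcal{L}(S)\bigr],
\end{equation*}
where $\hat{\mathcal{L}}_n(S)$ denotes the estimator (\ref{lnhat}) with $\mu_d(S_n)$ replaced by the (unknown) $\mu_d(S)$. I will show that the first bracket is $o_P(n^{-1/2})$, so the limiting distribution comes entirely from the second bracket, which is the genuine sampling error of an unbiased statistic.

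The key observation (cf.\ the second remark following (\ref{lnhat})) is that $\hat{\mathcal{L}}_n(S)$ can be rewritten as a symmetric $U$-statistic of degree $d+2$, with the kernel obtained by averaging $\mathbb{I}_{\{x_k\in\Delta(x_{\setminus k})\}}/\mu_d(\Delta(x_{\setminus k}))$ over the role of the ``test point'' $x_k$ among $d+2$ sample points. Hoeffding's classical CLT for $U$-statistics then yields the asymptotic normality of $\sqrt{n}(\hat{\mathcal{L}}_n(S)-\mathcal{L}(S))$, with asymptotic variance proportional to the variance of the first Hoeffding projection $\phi_1$. I would compute $\phi_1(X_1)$ by conditioning on whether $X_1$ plays the role of a simplex vertex or of the test point; a combinatorial expansion then recasts the resulting variance as $d^2\xi_1$, with $\xi_1=\mathrm{Var}(h_1(X_1))$ and $h_1$ the projection of the degree-$(d+1)$ kernel $h$ as in the statement. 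The non-convexity of $S$ together with $\overline{\mathring{S}}=S$ precludes $h_1$ from being almost surely constant, so $\xi_1>0$ and the limit is non-degenerate.

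To discard the plug-in error, note that the dependence on $\mu_d(S_n)$ in (\ref{lnhat}) is linear, so
\begin{equation*}
\hat{\mathcal{L}}_n(S_n)-\hat{\mathcal{L}}_n(S)\;=\;\Bigl(\frac{\mu_d(S_n)}{\mu_d(S)}-1\Bigr)\bigl(\hat{\mathcal{L}}_n(S)-1\bigr).
\end{equation*}
The second factor is bounded and, by Theorem~\ref{consistency}, tends in probability to $\mathcal{L}(S)-1$; the first factor vanishes a.s.\ because $|\mu_d(S_n)-\mu_d(S)|\leq d_\mu(S_n,S)\to 0$ by assumption. Provided $S_n$ is chosen so that $|\mu_d(S_n)-\mu_d(S)|=o_P(n^{-1/2})$ (as happens for the $r$-convex or $\rho,h$-cone convex hull in the admissible dimensions noted in the remark following Theorem~\ref{consistency}), the product is $o_P(n^{-1/2})$ and Slutsky's lemma delivers the conclusion.

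The main analytical obstacle is verifying that $\phi_1$ has finite second moment: although $\mathbb{I}_{\{x_k\in\Delta\}}\leq 1$, the factor $1/\mu_d(\Delta(X_1,\dots,X_{d+1}))$ is unbounded near degenerate configurations. One must therefore appeal to moment bounds for the volume of random simplices spanned by iid uniform points in $S$ (of the Efron--Miles type), which require $\overline{\mathring{S}}=S$ so that the density of $\mu_d(\Delta)$ at zero is suitably controlled. This is also where the ``empirical-process-in-one-parameter'' tool alluded to at the start of this subsection is used: uniformly in the parameter $x_1$, the one-dimensional empirical process defining $\phi_1$ has finite-variance projections, which is what makes the $U$-statistic CLT applicable in the present setting.
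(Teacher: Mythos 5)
Your route to the CLT for $\hat{\mathcal{L}}_n(S)$ --- symmetrizing it into a $U$-statistic of degree $d+2$ with kernel proportional to $\mathbb{I}_{\{x_{d+2}\in\Delta(x_1,\dots,x_{d+1})\}}/\mu_d(\Delta(x_1,\dots,x_{d+1}))$ and invoking Hoeffding's theorem --- has a genuine gap that you flag but cannot repair in the way you suggest: that kernel has infinite second moment. Conditioning on the vertices, $\E[g^2\mid X_1,\dots,X_{d+1}]=\mu_d(S)\,\mu_d(\Delta\cap S)/\mu_d(\Delta)^2\geq \mu_d(S)\,\mathbb{I}_{\{\Delta\subset S\}}/\mu_d(\Delta)$, and $\E[1/\mu_d(\Delta)]=\infty$ for $d+1$ iid uniform points in any full-dimensional compact set, because near-degenerate (flat) simplices give $\mathbb{P}(\mu_d(\Delta)\leq \eps)\asymp\eps$ (already for $d=1$ one has $\E|X_1-X_2|^{-1}=\infty$). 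The hypothesis $\overline{\mathring{S}}=S$ does nothing to control this --- in the paper it is used only to prove $\xi_1>0$ --- so no Efron--Miles-type moment bound rescues the degree-$(d+2)$ kernel. The paper avoids the issue entirely: it works with the \emph{bounded} degree-$(d+1)$ kernel $h$ (the volume ratio lies in $[0,1]$), applies the classical $U$-statistic CLT to $\mathcal{L}_n(S)$ from (\ref{uest}), and reduces $\hat{\mathcal{L}}_n(S)$ to it by proving $\sqrt{n}\,|\hat{\mathcal{L}}_n(S)-\mathcal{L}_n(S)|\to 0$ in probability through a Vapnik--Chervonenkis uniform deviation bound over the class of simplices with $\mu_d(\Delta)/\mu_d(S)>\delta_n$; this is exactly what is established in the proof of Theorem \ref{consistency}, which you could have quoted instead of rebuilding the CLT around an unbounded kernel. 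Your one-line assertions that a ``combinatorial expansion'' identifies the limiting variance with $d^2\xi_1$ and that $\xi_1>0$ are also unproved; the paper spends most of its argument on an explicit geometric construction of a positive-probability event on which $h>\ell>0$, which is precisely where non-convexity and $\overline{\mathring{S}}=S$ enter.

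Your treatment of the plug-in term is essentially sound: the linearization $\hat{\mathcal{L}}_n(S_n)-\hat{\mathcal{L}}_n(S)=\bigl(\mu_d(S_n)/\mu_d(S)-1\bigr)\bigl(\hat{\mathcal{L}}_n(S)-1\bigr)$ is correct, and you are right that one needs $|\mu_d(S_n)-\mu_d(S)|=o_P(n^{-1/2})$ for negligibility (the rates quoted after Theorem \ref{consistency} deliver this only in low dimension). The paper instead absorbs the scaling into a process $U_n(\tau)$, $\tau$ near $1$, and proves tightness in $C([\tau_1,\tau_2])$ following Boente and Fraiman; your direct argument is more elementary and makes the required rate explicit. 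But the core of the theorem --- the asymptotic normality of $\hat{\mathcal{L}}_n(S)$ itself --- is not established by your proposal.
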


\section{Effective computation of non-convexity measures}\label{sec:imp}

In this section we will present some ideas and algorithms for the practical computation of some of the estimators of non-convexity measures studied in the previous sections and their implementation in R (\citep{rcran}). We restrict ourselves to the bidimensional case. Therefore, let us assume that we have $\aleph_n=\{X_1,\ldots, X_n\}$ iid sample of $X$ supported on $S\subset\mathbb{R}^2$.
We will describe how to compute $\mathcal{D}(\aleph_n)$, $\hat{\mathcal{M}}({S}_n)$, $\hat{\mathcal{W}}_m({S}_n)$ and $\hat{\mathcal{L}}_n({S}_n)$, as defined in (\ref{dest}), (\ref{mest}), (\ref{west}) and (\ref{lnhat}), respectively. 

\

\noindent{\textit{Computation of  $\mathcal{D}(\aleph_n)$}}. Recall that $\mathcal{D}(\aleph_n)=\sup_{s\in \text{co}(\aleph_n)}d(s,\aleph_n)/\text{diam}(\aleph_n)$. The exact computation of $\sup_{s\in \text{co}(\aleph_n)}d(s,\aleph_n)$ is intimately related to the so-called largest empty circle (LEC) problem, a well known problem in computational geometry, see \citep{pre1990}. It is defined on a set $P$ of $n$ points in the plane as the problem of finding the largest circle that contains no points of $P$ and whose center lies inside the convex hull of $P$. It can be solved by means of the Voronoi diagram in $O(n\log(n))$ time (the largest empty circle is proved to be always centered at either a vertex on the Voronoi diagram of $P$ or on an intersection between a Voronoi edge and the convex hull of $P$). Note that $\sup_{s\in \text{co}(\aleph_n)}d(s,\aleph_n)$ corresponds to the radius of this largest empty circle. We also refer to \citep{pre1990} for a generalization of the result to a multidimensional framework.

\

\noindent{\textit{Computation of  $\hat{\mathcal{M}}({S}_n)$}}. Recall that $\hat{\mathcal{M}}({S}_n)=\mu_d(\text{co}(\aleph_n)\setminus S_n)/\mu_d(\text{co}(\aleph_n))$, being ${S}_n$ a suitable set estimator, as described in Subsection \ref{estM}. For the discussion of the implementation we will consider as ${S}_n$ the $r$-convex hull of $\aleph_n$, denoted by $C_r(\aleph_n)$. The algorithm for computing $C_r(\aleph_n)$ can be found in \citep{ede1983}. The package \verb|alphahull| (\citep{alp25}) provides an implementation of the algorithm in R, see also \citep{pat2010}. Moreover, since $C_r(\aleph_n)\subset \text{co}(\aleph_n)$, the numerator in $\hat{\mathcal{M}}({S}_n)$ can be directly computed as the difference between the area of $\text{co}(\aleph_n)$ and that of $C_r(\aleph_n)$, whose values can be determined exactly.

\

\noindent{\textit{Computation of  $\hat{\mathcal{W}}_m({S}_n)$}}.
Recall that $\hat{\mathcal{W}}_m(S_n)=1-\frac{1}{m}\sum_{j=1}^m  \mathbb{I}_{\{((Z_{j1}+Z_{j2})/2)\in {S}_n\}}$, being ${S}_n$ a suitable set estimator of $S$, as described in Subsection \ref{TG2}, and $(Z_{11},Z_{12}),\dots,(Z_{m1},Z_{m2})$, $m$ random pairs of points, independent of $\aleph_n$, uniformly distributed on ${S}_n\times {S}_n$. In accordance with Theorem \ref{thtamura}, we consider $S_n=B(\aleph_n,\eps_n)$ for some $\eps_n>0$. Note that the points $(Z_{ji},Z_{j2}), j=1,\ldots, m,$  can be obtained using an standard acceptance-rejection method, generating a large number of points uniformly distributed on a rectangle containing $S_n$ and accepting those whose distance to $\aleph_n$ is lower than $\eps_n$. In order to make the computation faster, we use a $kd$-tree to find the point in $\aleph_n$ that is closest to each generated point. Also, the indicator function  in $\hat{\mathcal{W}}_m(S_n)$ can be evaluated  for each $j=1,\ldots,m,$  through the distance from $(Z_{j1}+Z_{j2})/2$ to $\aleph_n$. We make use of the package \verb|RANN| (\citep{rann}), providing fast nearest neighbour search, for the implementation of the estimator in R.

\

\noindent{\textit{Computation of  $\hat{\mathcal{L}}_n({S}_n)$}}. Recall the definition of $\hat{\mathcal{L}}_n({S}_n)$ in (\ref{lnhat}). For its implementation, we need an estimation of $\mu_d(S)$ (in case it is unknown) and the computation of the cardinality of 
$$\mathcal{R}(X_{i_1},\dots,X_{i_{d+1}})=\big[\aleph_n\setminus \{X_{i_1},\dots,X_{i_{d+1}}\}\big]\cap \Delta(X_{i_1},\dots,X_{i_{d+1}}).$$
Therefore, for $d=2$, we need to calculate for all triangles determined by three points in $\aleph_n$, the numbers of sample points in their interiors. Al algorithm for solving this problem can be found in \citep{epp1992}. They prove (Theorem 2.1) that a points set $\aleph_n$ in the plane can be processed in $O(n^2)$ time and space, such that afterward, for each triangle in $\aleph_n$, the number of points in it can be determined in constant time.

\section{Real Data}\label{sec:dat}

For this illustrative example we have considered images from the International Skin Imaging Collaboration (ISIC) archive, an open source platform with publicly available digital images of skin lesions\footnote{\urlstyle{same}\url{https://www.isic-archive.com/}}. ISIC datasets are widely used by researchers in the area of skin cancer detection using image analysis. Also, the annual challenges sponsored by ISIC since 2016, see \citep{gut2016}, have led to significant advances in the field. Being skin cancer a major public health problem, initiatives such as ISIC's are especially important to promote research and development of methods to improve the early detection. We refer to \citep{cas2022} for a review on scientific usage of the ISIC image datasets.
 
 In Figure \ref{isic} (top row) we show three examples of dermoscopic images ($600\times 450$ pixels) from the ISIC archive dataset\footnote{The image name of the selected examples are (a) ISIC\textunderscore 0024764, (b) ISIC\textunderscore 0024726, and (c) ISIC\textunderscore 0025586} and their corresponding binary segmentation masks (bottom panel). The images are available at \citep{tsc2018}. We refer to \citep{tsc2019} for a complete discussion on image processing techniques for automated segmentation. 
 
The three selected examples are clearly different in terms of convexity. Figure \ref{isic} (a) shows a benign skin lesion that is practically convex. It is therefore expected that any of the defined measures of non-convexity would take values close to zero in this case. On the other hand, Figure \ref{isic} (b) and (c) show skin lesions (also benign) that are clearly non-convex. In these cases it is to be expected not only that the defined measures of non-convexity take values {\textit{significantly}} larger than zero but also different from each other, thus reflecting that the way in which these two lesions depart from convexity is not the same. For instance, the lesion in Figure \ref{isic} (c) is apparently closer to be convex than the lesion in Figure \ref{isic} (b) when the indexes $\mathcal{D}(S)$ or $\mathcal{M}(S)$ are considered. However, if we consider the index $\mathcal{W}(S)$ one would expect the opposite behavior. The results in Table \ref{tab:isic} on the estimated values of these three measures of non-convexity corroborate these considerations. For each lesion (white area in each image) and each sample size ($n=1000, 2500, 5000, 10000$), we have generated 200 uniform samples (we randomly select $n$ white pixels in the binary segmentation masks and, for each selected pixel, we generate a uniformly distributed random point within that pixel). We report the mean value and standard deviation of $\mathcal{D}(\aleph_n)$, $\hat{\mathcal{M}}({S}_n)$ and $\hat{\mathcal{W}}_m({S}_n)$, over the 200 samples. For the case of $\hat{\mathcal{M}}({S}_n)$, ${S}_n$ is the $r$-convex hull of $\aleph_n$. Note that, in this case, the estimation depends on the parameter $r$, that in practice may be unknown. For  $\hat{\mathcal{W}}_m({S}_n)$, ${S}_n=B(\aleph_n,\epsilon_n)$. Again, the estimation depends on an unknown parameter $\eps_n$. In Table \ref{tab:isic} we report the results for a fixed value of $r$ ($r=7.5$) and $\eps_n$ ($\eps_n=5$). In the Supplementary material we provide, in addition, the results obtained for different values for $r$ and $\eps_n$, so that we can evaluate their impact on $\hat{\mathcal{M}}({S}_n)$ and $\hat{\mathcal{W}}_m({S}_n)$, respectively. We observe that an optimal selection for $r$ may depend on the sample size. In general, it is recommended to use small values of $r$ as $n$ increases, as the risk of {\textit{convexifying}} the set is reduced, see Figure 1 and Table 1 in  the Supplementary material.  The simulations also show a considerable stability with respect to $\eps_n$, as long as its value is not too small with respect to the sample size (as is the case for the considered values of $\eps_n$ for $n=1000$, see Figure 2 and Table 2 in  the Supplementary material).

Our goal with these simple examples is to show how to compute an approximate value for some of the discussed measures of non-convexity, using the proposed estimation methods based on random samples of points, rather than dealing with all the available pixels in the images. Also, and although this is out of the scope of our paper, these estimated measures (once computed over the whole dataset) could be included as input features for the classification of skin lesions. It would be interesting to analyze if, in combination with other relevant characteristics, some of these non-convexity measures help to increase the accuracy of the considered algorithms, see \citep{oliveira2019}.

\captionsetup{position=top}
\begin{figure}[!ht]
\begin{center}
\subfloat[]{\includegraphics[width=0.25\textwidth]{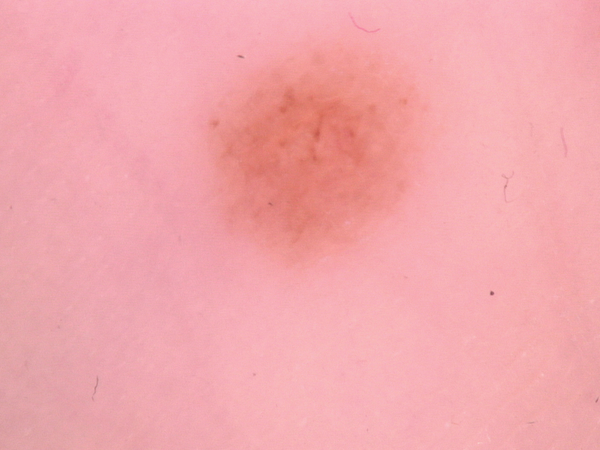}}\hspace{0.5cm}
\subfloat[]{\includegraphics[width=0.25\textwidth]{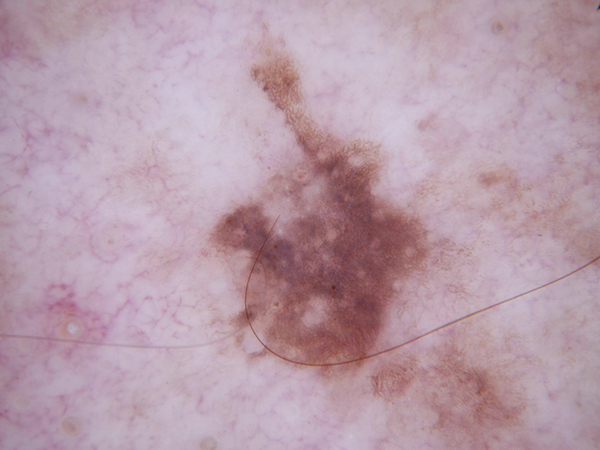}}\hspace{0.5cm}
\subfloat[]{\includegraphics[width=0.25\textwidth]{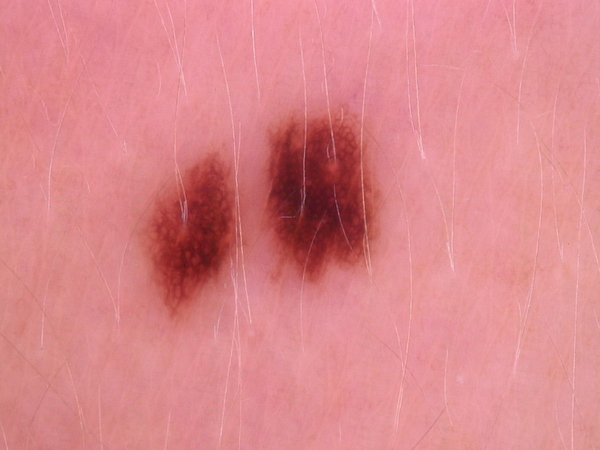}}\\ \vspace{0.2cm}
{\includegraphics[width=0.25\textwidth]{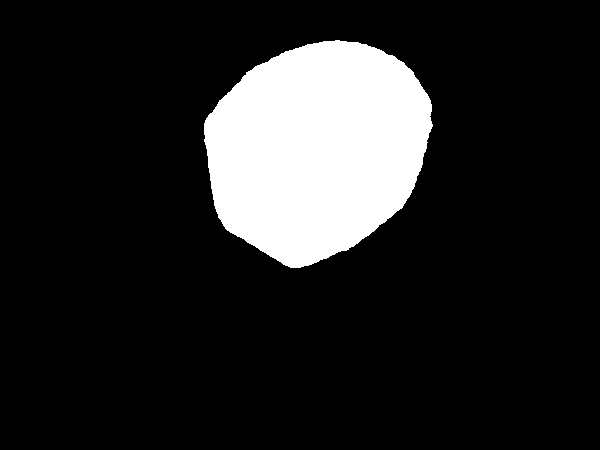}}\hspace{0.5cm}
{\includegraphics[width=0.25\textwidth]{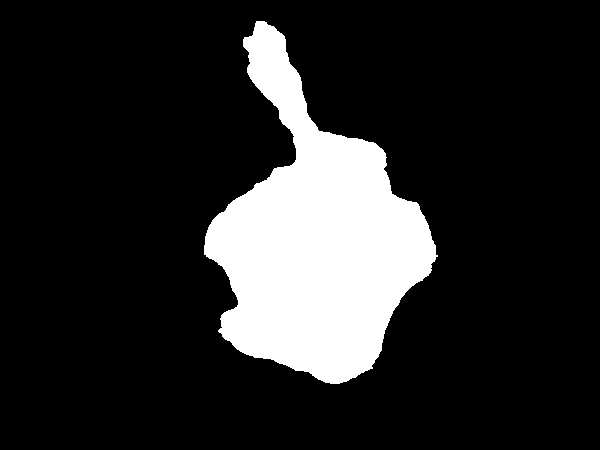}}\hspace{0.5cm}
{\includegraphics[width=0.25\textwidth]{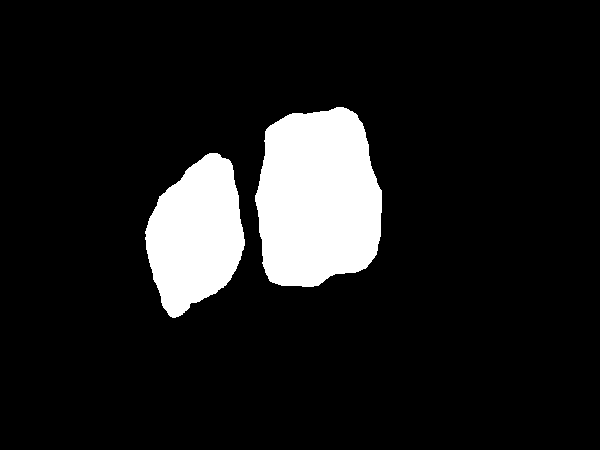}}\vspace{0.2cm}
\caption{Top row: Dermoscopic images from the ISIC archive dataset. Bottom row: Segmentantion masks}\label{isic}
\end{center}
\end{figure}

\begin{table}[!ht]
\begin{center}
\begin{tabular}{cccccccc}
&&\multicolumn{2}{c}{$\mathcal{D}(\aleph_n)$}&\multicolumn{2}{c}{$\hat{\mathcal{M}}({S}_n)$}&\multicolumn{2}{c}{$\hat{\mathcal{W}}_m({S}_n)$}\\
\cline{3-8}
Image&$n$&Mean & Sd&Mean & Sd&Mean & Sd\\
\hline
 \multirow{4}{*}{{\includegraphics[width=0.15\textwidth]{figures/ISIC_0024764_segmentation.png}}}
&1000 & 0.0467 & 0.0038 & 0.2868 & 0.0258 & 0.1277 & 0.0121 \\ 
&2500 & 0.0309 & 0.0025 & 0.0367 & 0.0041 & 0.0053 & 0.0016 \\ 
&5000 & 0.0228 & 0.0020 & 0.0231 & 0.0017 & 0.0001 & 0.0001 \\ 
&10000 & 0.0166 & 0.0014 & 0.0163 & 0.0010 & 0.0000 & 0.0000 \\ 
\hline
 \multirow{4}{*}{{\includegraphics[width=0.15\textwidth]{figures/ISIC_0024726_segmentation.png}}}
&1000 & 0.1566 & 0.0041 & 0.5042 & 0.0206 & 0.1956 & 0.0117 \\ 
&2500 & 0.1554 & 0.0025 & 0.2563 & 0.0052 & 0.0563 & 0.0024 \\ 
&5000 & 0.1545 & 0.0017 & 0.2393 & 0.0026 & 0.0471 & 0.0010 \\ 
&10000 & 0.1537 & 0.0012 & 0.2316 & 0.0017 & 0.0465 & 0.0008 \\
 \hline
  \multirow{4}{*}{{\includegraphics[width=0.15\textwidth]{figures/ISIC_0025586_segmentation.png}}}
 &1000 & 0.1087 & 0.0047 & 0.3155 & 0.0190 & 0.1918 & 0.0108 \\ 
  &2500 & 0.1057 & 0.0029 & 0.2025 & 0.0040 & 0.1121 & 0.0040 \\ 
  &5000 & 0.1048 & 0.0021 & 0.1860 & 0.0024 & 0.0992 & 0.0021 \\ 
  &10000 & 0.1040 & 0.0013 & 0.1768 & 0.0015 & 0.0920 & 0.0015 \\ 
 \hline
\end{tabular}
\caption{For each image and each sample size, mean value and standard deviation of $\mathcal{D}(\aleph_n)$, $\hat{\mathcal{M}}({S}_n)$ and $\hat{\mathcal{W}}_m({S}_n)$ over 200 uniform samples (on the white area). In $\hat{\mathcal{M}}({S}_n)$, ${S}_n=C_r(\aleph_n)$ with $r=7.5$. In $\hat{\mathcal{W}}_m({S}_n)$, ${S}_n=B(\aleph_n,\eps_n)$ with $\eps_n=5$.}\label{tab:isic}
\end{center}
\end{table}

\section{Concluding Remarks}\label{sec:con}
We have considered and compared several measures of non-convexity of a set $S\subset\mathbb{R}^d$, that attempt to quantify departures from convexity from different perspectives. Also, a new non-convexity index is introduced, which is easy to compute and provides another possible approach to the problem.  We provide consistent estimators for them based on an iid sample $\aleph_n$ on $S$. For some of them we also obtain the asymptotic distribution.
Up to our knowledge, this is the first time that this problem has been addressed from a statistical point of view. 

\

A closely related problem is that of testing whether a set is convex, based on a sample on it. This problem has been studied, for instance, in \citep{aaron2017maximal} and \citep{delicado2014data}. Although we do not focus on this problem, one could think of deriving convexity tests from some of the measures of non-convexity considered in this work, see Remark \ref{remD}.

\

A different interesting problem, which is not address in the present manuscript, is to analyze for which measures of non-convexity (when they are small), convex relaxation problems work well. Recall that in convex relaxation we replace the problem
	$$
	\mbox{minimize } f(x) \ \ \mbox{subject to} \ \ x \in S,
	$$
	by its convex relaxation, 
	$$ \mbox{minimize } f(x) \ \ \mbox{subject to} \ \ x \in S_1,
	$$
	where $S \subset S_1$ is compact, convex and $f: S_1 \to \mathbb R$ is convex and continuous in $S_1$. See for instance  \citep{zhou2021} and the references therein. Just to get an idea of this problem consider the following toy example.
For the family of non-convex sets depending on a parameter $t$ given in Figure 2, 

$$	A_1(t)= [-1,1]\times [-1,1] \setminus [-t,t]\times [-t,t]  \quad  \textrm{with  $t \in [0,0.9].$}
$$
If we take $f(x,y)=x^2 + y^2$ and $S=\text{co}(A_1(t))$ the minimum in $S$ is zero and is attained at the point $(0,0)$, while the minimum at $A_1(t)$ is attained at the points
		$(t,0),(0,t),(-t,0), (0, -t)$ and takes the value $t^2$.


\section*{Appendix A}

In this section we provide the proofs of the theoretical results presented in the manuscript.\\

\begin{proof}[Proof of Theorem \ref{th1}]

Since $\text{co}(\aleph_n)\subset \text{co}(S)$,
\begin{align*}
	\sup_{x\in  \text{co}(\aleph_n)} d(x,\aleph_n)\leq & \ \sup_{s\in \text{co}(S)} d(s,S)+d_H(\aleph_n,S)\\
	= &\  \text{diam}(\text{co}(S))\mathcal{D}(S)-\text{diam}(\text{co}(\aleph_n))\mathcal{D}(S) +\\
	&\text{diam}(\text{co}(\aleph_n))\mathcal{D}(S)+d_H(\aleph_n,S).
\end{align*}
Since $0<\text{diam}(\text{co}(S))-\text{diam}(\text{co}(\aleph_n))\leq 2d_H(\aleph_n,S)$ and $\mathcal{D}(S)\leq 1$,
\begin{align*}
\mathcal{D}(\aleph_n)-\mathcal{D}(S)\leq& \ \frac{2\mathcal{D}(S)d_H(\aleph_n,S)}{\text{diam}(\text{co}(\aleph_n))}+\frac{d_H(\aleph_n,S)}{\text{diam}(\text{co}(\aleph_n))}\\
\leq & \
\frac{2d_H(\aleph_n,S)}{\text{diam}(\text{co}(S))-2d_H(\aleph_n,S)}+\frac{d_H(\aleph_n,S)}{\text{diam}(\text{co}(S))-2d_H(\aleph_n,S)}.
\end{align*}
From $d_H(\aleph_n,S)\to 0$ it follows that for all $\gamma>0$,
$$\mathcal{D}(\aleph_n)-\mathcal{D}(S)\leq \frac{3(1+\gamma)d_H(\aleph_n,S)}{\text{diam}(\text{co}(S))}.$$ 
To bound $\mathcal{D}(S)-\mathcal{D}(\aleph_n)$ let   $s\in \text{co}(S)$ and $s_n\in \text{co}(\aleph_n)$ the closest point to $s$.
 
	\begin{align*}
		d(s,S)\leq & \ d(s,s_n)+d(s_n,S)\\
		\leq &\ d(s,s_n)+d(s_n,\aleph_n)+d_H(\aleph_n,S)\\
		\leq &\  d(s,s_n)+\sup_{s'\in \text{co}(\aleph_n)}d(s',\aleph_n)+d_H(\aleph_n,S)\\
		= &\ d(s,s_n)+\text{diam}(\text{co}(\aleph_n))\mathcal{D}(\aleph_n)+d_H(\aleph_n,S).
	\end{align*}
From $d_H(\text{co}(S),\text{co}(\aleph_n))\leq d_H(S,\aleph_n)$ it follows that $d(s,s_n)\leq d_H(\aleph_n,S)$. 
	Then
\begin{align*}
\sup_{s\in \text{co}(S)} d(s,S)\leq &\ 2d_H(S,\aleph_n)+\text{diam}(\text{co}(\aleph_n))\mathcal{D}(\aleph_n)\\
\leq &\ 2d_H(S,\aleph_n)+\text{diam}(\text{co}(S))\mathcal{D}(\aleph_n).
\end{align*} 
Finally,
	$$\mathcal{D}(S)-\mathcal{D}(\aleph_n)\leq \frac{2d_H(S,\aleph_n)}{\text{diam}(\text{co}(S))}.$$	
\end{proof}


\begin{proof}[Proof of Theorem  \ref{consistM}]
From $d_\mu(\text{co}(\aleph_n),\text{co}(S))=\mathcal{O}(d_H(\aleph_n,S))$  (see the proof of Theorem 3 in \citep{aaron2017maximal}), it follows that $\mu(\text{co}(\aleph_n))\to \mu(\text{co}(S))$. Then, from Slutsky's Theorem, we have to prove that 	
\begin{equation}\label{thMeq1}
\mu(\text{co}(\aleph_n) \setminus S_n)\to \mu(\text{co}(S)\setminus S).
\end{equation}
Let us split it into two terms,
$$\mu(\text{co}(\aleph_n)\cap S_n^c)=\mu(\text{co}(\aleph_n)\cap S_n^c\cap S)+\mu(\text{co}(\aleph_n)\cap S_n^c\cap S^c)=:I+II.$$
Since $I\leq d_\mu(S,S_n)\to 0$, we have to prove that $II\to \mu(\text{co}(S)\setminus S)$. 
$$II=\mu(\text{co}(\aleph_n)\cap S^c)-\mu(\text{co}(\aleph_n)\cap S^c\cap S_n)=:A+B.$$
Since $B\leq d_\mu(S,S_n)\to 0$, \eqref{thMeq1} follows from $d_\mu(\text{co}(\aleph_n),\text{co}(S))\to 0$.
\end{proof}

\begin{proof}[Proof of Theorem \ref{thtamura}]	

 Let us assume, without loss of generality, that $\mu_d(S)=1$.
 	Let $Z_{n1},Z_{n2}$ uniformly distributed in $S_n=B(\aleph_n, \epsilon_n)$. 
	By the Law of Large Numbers, conditionally to $S_n$ we have that for $n$ fixed 
	$$\lim_{m \to \infty} \frac{1}{m}\sum_{j=1}^m  \mathbb{I}_{\{(Z_{j1}+Z_{j2})/2\in S_n \vert S_n\}} =  \mathbb{P}((Z_{n1}+Z_{n2})/2\in S_n \vert S_n)\quad a.s..$$
Now,
\begin{multline}
	\mathbb{P}((Z_{n1}+Z_{n2})/2\in S_n|S_n)=\mathbb{P}((Z_{n1}+Z_{n2})/2\in S_n\setminus S|S_n)+\\
	\mathbb{P}((Z_{n1}+Z_{n2})/2\in S|S_n,Z_{n1},Z_{n2}\in S)\mathbb{P}(Z_{n1},Z_{n2}\in S)
\end{multline}
	Let $X_1,X_2$ uniformly distributed on $S$, since  $S\subset S_n$ eventually almost surely,  we have that
	$$\mathbb{P}((Z_{n1}+Z_{n2})/2\in S|S_n,Z_{n1},Z_{n2}\in S)=\mathbb{P}((X_1+X_2)/2\in S).$$
Also, 
	$$\mathbb{P}((Z_1+Z_2)/2\in S_n\setminus S|S_n)\leq \mathbb{P}((Z_1+Z_2)/2\in B(S,\epsilon_n) \setminus S|S_n,\{Z_{n1},Z_{n2}\in S\})$$ 
On the other hand,
	$$\mathbb{P}((Z_{n1}+Z_{n2})/2\in B(S,\epsilon_n) \setminus S|S_n, \{Z_{n1},Z_{n2}\in S\})=\mathbb{P}((X_1+X_2)/2\in B(S,\epsilon_n) \setminus S)$$
where $X_1,X_2$ is uniformly distributed on $S$. 	
	$$\mathbb{P}((X_1+X_2)/2\in B(S,\epsilon_n) \setminus S)\to \mathbb{P}((X_1+X_2)/2\in \partial S)=0.$$
\end{proof}


\begin{proof}[Proof of Theorem  \ref{consistency}]

Since $|{\hat{\mathcal{L}}_n}({S}_n)-{\hat{\mathcal{L}}_n}(S)|\leq d_\mu(S,{S}_n)$, in order to prove that ${\hat{\mathcal{L}}_n}({S}_n)\to \mathcal{L}(S)$ in probability it is enough to prove that ${\hat{\mathcal{L}}_n}(S) \to \mathcal{L}(S)\text{ in probability}$. In fact we will prove that $\sqrt{n}({\hat{\mathcal{L}}_n}(S)-{\mathcal{L}}_n(S))\to 0$ in probability,  which will be used in the proof of Theorem \ref{asymptdist1}. Recall that  ${\mathcal{L}}_n(S)$ denotes the $U$-statistic associated  to $\mathcal{L}(S)$, see (\ref{uest}), with kernel 
$$h(X_1,\ldots,X_{d+1})= 1-\frac{\mu_d(\Delta(X_1,\dots,X_{d+1})\cap S)}{\mu_d(\Delta(X_1,\dots,X_{d+1}))}.$$ 
Let us write
$$\hat{\mathcal{L}}_n(S)= {\mathcal{L}}_n(S) + \Big(\hat{\mathcal{L}}_n(S)-{\mathcal{L}}_n(S) \Big).$$
The kernel function $h$ is bounded. Then, it follows from Section 5.4 (Theorem A) in \citep{serfling2009} that,  
		$${\mathcal{L}}_n(S) \ccs  \mathcal{L}(S).$$		
To simplify the notation, we write $\Delta$ instead of $\Delta(X_{i_1},\dots,X_{i_{d+1}})$. Then,
		\begin{align*}
			\hat{\mathcal{L}}_n(S)-{\mathcal{L}}_n(S)= & \frac{1}{\binom{n}{d+1}} \sum_{1\leq i_1<\dots <i_{d+1}\leq n} \frac{\frac{\mu_d(S)}{n-d-1} \# \mathcal{R}(X_{i_1},\dots,X_{i_{d+1}}) -\mu_d(\Delta\cap S)}{\mu_d(\Delta)}.
		\end{align*}
		For all $\delta>0$,
		\begin{small}
			\begin{multline*}
				\mathbb{P}(|\hat{\mathcal{L}}_n(S)-{\mathcal{L}}_n(S)|>\epsilon)\leq \\ \mathbb{P}\Bigg(\sup_{\Delta}\Bigg|\frac{\mu_d(\Delta \cap S)-\frac{\mu_d(S)}{n-d-1} \# \mathcal{R}(X_{i_1},\dots,X_{i_{d+1}})}{\mu_d(\Delta)}\Bigg|>\epsilon\Bigg|\frac{\mu_d(\Delta)}{\mu_d(S)}>\delta\Bigg)\\
				+ 	\mathbb{P}(\mu_d(\Delta)/\mu_d(S)<\delta).
			\end{multline*}
		\end{small}
		Let $\mathcal{A}_\delta$ be the class of all $d$-dimensional simplexes of $\mathbb{R}^d$ whose vertices belongs to $S$ and $\mu_d(\Delta)/\mu_d(S)>\delta$. From Corollary 13.1 in \citep{luc}  $\mathcal{A}_0$ has finite Vapnik-Chervonenkis dimension, denoted by $V_{\mathcal{A}_0}$ and $V_{\mathcal{A}_\delta}<V_{\mathcal{A}_0}$.
		\begin{multline*}
			\mathbb{P}(|\hat{\mathcal{L}}_n(S)-{\mathcal{L}}_n(S)|>\epsilon)\leq \\ \mathbb{P}\Bigg(\sup_{\Delta \in \mathcal{A}_\delta}\Bigg|\frac{\mu_d(\Delta \cap S)-\frac{\mu_d(S)}{n-d-1}\# \mathcal{R}(X_{i_1},\dots,X_{i_{d+1}}) }{\mu_d(\Delta)}\Bigg|>\epsilon\Bigg)\\
			+ 	\mathbb{P}(\mu_d(\Delta)/\mu_d(S)<\delta).
		\end{multline*}		
		Let $\nu$ denote the uniform measure on $S$, that is, $\nu(B)=\mu_d(B\cap S)/\mu_d(S)$. Its empirical version based on an iid sample, $X_1,\dots,X_{n-d-1}$, is
		denoted by $\nu_{n-d-1}(B)$.
		Then for all $\delta>0$.
		\begin{align*}
			\mathbb{P}(|\hat{\mathcal{L}}_n(S)-{\mathcal{L}}_n(S)|>\epsilon)\leq &
		\ 	\mathbb{P}\Bigg(\sup_{\Delta\in  \mathcal{A}_\delta}\Big|\nu(\Delta)-\nu_{n-d-1}(\Delta)\Big|>\epsilon \mu_d(\Delta)/\mu_d(S)\Bigg)\\
			&\ \hspace{2cm} + 			\mathbb{P}(\mu_d(\Delta)/\mu_d(S)<\delta)\\
						\leq &\  			(n-d)^{V_{\mathcal{A}_0}}\exp(-C_1\delta^2 \epsilon^2(n-d-1))\\
				&\ \hspace{2cm} 			+\mathbb{P}(\mu_d(\Delta)/\mu_d(S)<\delta),
		\end{align*}
for some $C_1>0$. Observe that $\mathbb{P}(\mu_d(\Delta)/\mu_d(S)<\delta)\to 0$ as $\delta\to 0$. To get $\sqrt{n}|\hat{\mathcal{L}}_n(S)-{\mathcal{L}}_n(S)|\to 0$ in probability it is enough to take $\delta_n\to 0$ such that $\sqrt{n}\delta_n^2/\log(n)\to \infty$.			
\end{proof}

\begin{proof}[Proof of Theorem \ref{asymptdist1}]

First we will prove \eqref{tcl}  for $\hat{\mathcal{L}}_n(S)$, that is, assuming that $\mu_d(S)$ is known. Since $\sqrt{n}|\hat{\mathcal{L}}_n(S)-{\mathcal{L}}_n(S)|\to 0$ in probability, from Slutsky's Theorem it is enough to prove $\sqrt{n} \left( {{\mathcal{L}}_n}({S}) - \mathcal{L}(S) \right) \to N(0, d^2 \xi_1)$ in distribution. 
		 We have that  $0 \leq h \leq 1$, which entails that  $\mathbb{E}(h^2)< \infty $  and $h_1$ is non negative.
	Since $S$ is not convex there exists $x_1,x_2\in S$ such that $\overline{x_1x_2}$ the segment joining $x_1$ and $x_2$ is not included in $S$. 
	Let $t\in \overline{x_1x_2}\cap S^c$ such that $d(t,S)>\delta>0$ for some $\delta>0$. Since $S=\overline{\mathring{S}}$ there exists $v_1,v_2\in \mathring{S}$ and $||x_i-v_i||<\delta/4$, $i=1,2$. Let $x,x_4,\dots,x_{d+1} \in \mathring{S}$ and $0<\delta_1<\delta/4$ such that $B(x_i,\delta_1)\subset S$  for all $i=4,\dots,d+1$, $B(v_1,\delta_1)\subset \mathring{S}$, $B(v_2,\delta_1)\subset \mathring{S}$  and $B(x,\delta_1)\subset S$.
	Let 
	$$a_1\in B(x,\delta_1), a_2\in B(v_1,\delta_1), a_3\in B(v_2,\delta_1),a_4\in B(x_4,\delta_1),\dots,a_{d+1}\in B(x_{d+1},\delta_1)$$
	and $\Delta=\Delta(a_1,\dots,a_{d+1})$. Then there exists $\ell$, not depending on $\Delta$, such that
	\begin{equation} \label{cota1}
		\frac{\mu_d(B(t,\delta)\cap \Delta)}{\mu_d(\Delta)}>\ell >0.
	\end{equation}
	Let 
	$$A=\{\omega:X_1(\omega)\in B(v_1,\delta_1), X_2(\omega)\in B(v_2,\delta_1),X_i(\omega)\in B(x_i,\delta_1), i=4,\dots,d+1\}.$$
	If $\omega \in A$ then, from \eqref{cota1} and $S\cap \Delta\subset B(t,\delta)^c\cap \Delta$, for all $x\in \mathring{S}$,
\begin{multline*}
	\frac{\mu_d(\Delta(x,X_1,X_2,X_4,\dots,X_{d+1})\cap S)}{\mu_d(\Delta(x,X_1,X_2,X_4,\dots,X_{d+1}))}\leq \frac{\mu_d(\Delta(x,X_1,X_2,X_4,\dots,X_{d+1})\cap B(t,\delta)^c)}{\mu_d(\Delta(x,X_1,X_2,X_4\dots,X_{d+1}))}\\
	<1-\ell<1.
\end{multline*}
	Then, for all $x\in \mathring{S}$,
	\begin{align*}
		\mathbb{P}(h(x,X_1, X_{2}, \ldots, X_{d})<1)>&\  \mathbb{P}(A)\\
		\geq&\ (\mu_d(B(0,\delta_1))/\mu_d(S))^{d}>0.
	\end{align*} 
	This leads to $\xi_1= \textrm{Var} \left( h_1(X) \right)>0$ and from Section 5.5 (Theorem A)  of \citep{serfling2009} it follows that 	
	$$ \sqrt{n} \left( {\mathcal{L}}_n(S)- \mathcal{L}(S) \right) \to N(0, d^2 \xi_1) \quad \textrm{in distribution.}$$
	Since $\sqrt{n}|\hat{\mathcal{L}}_n(S)-{\mathcal{L}}_n(S)|\to 0$ in probability, from Slutsky's Theorem we proved \eqref{tcl}  for $\hat{\mathcal{L}}_n(S)$. 
		Now let us prove \eqref{tcl} in the general case, that is, when $\mu_d(S)$ is unknown. We will use the same technique used to prove Proposition 2.1 in \citep{boente1995asymptotic}.
	Let  $[\tau_1,\tau_2]$ be an interval containing 1, let us define 
	$$U_n(\tau)=\sqrt{n}\Big(\tau\tilde{\mathcal{L}}(S)-\tau \mathcal{L}(S)\Big).$$ 
	Now to prove the general case it is enough to prove that there exists a continuous process $U(\tau)$ such that 
	\begin{equation}\label{w}
		U_n(\tau)\stackrel{w}{\to} U(\tau),
	\end{equation} 
	where $\stackrel{w}{\to}$ means weak convergence  on the space $C([\tau_1,\tau_2])$ of continuous functions defined on $[\tau_1,\tau_2]$ endowed with the uniform norm. Observe that we have proved that $U_n(\tau)\to  N(0,(d\tau)^2\xi_1)$ in distribution, as $n\to \infty$, for all $\tau\in [\tau_1,\tau_2]$. Then from Theorem 8.1 in \citep{bill68} to prove \eqref{w} it is enough to prove that the process $U_n$ is tight. To prove that, from Theorem 12.2 in \citep{bill68} it is enough to prove 1) $U_n(1)$ is tight, and 2) there exists $\beta$ such that, for all $n$, and for all $\tau,\tau'\in [\tau_1,\tau_2]$,  $\mathbb{E}(U_n(\tau)-U_n(\tau'))^2\leq \beta(\tau-\tau')^2$. Point 1 follows from $U_n(1)\to N(0,d^2\xi_1)$. To prove point 2, observe that
	$$\mathbb{E}(U_n(\tau)-U_n(\tau'))^2=(\tau-\tau')^2 \mathbb{E}(U_n(1)^2).$$
The proof will be concluded by showing that $\sup_n \mathbb{E}(U_n(1)^2)<\infty$. Let us consider the $U$-statistic $1-\tilde{\mathcal{L}}(S)$, with kernel $1-h(X_1,\dots,X_{d+1})$. Then, $\mathbb{V}(1-\tilde{\mathcal{L}}(S))=(1/n)\mathbb{E}(U_n(1)^2)$. From Theorem 5.2 in \citep{hoeffding1992class}, $\lim_n  n\mathbb{V}((1-\tilde{\mathcal{L}}(S)))<\infty$, then 
$\sup_n \mathbb{E}(U_n(1)^2)<\infty$.

\end{proof}

  \begin{center}
	{\LARGE\bf SUPPLEMENTARY MATERIAL\\ \vspace{.2cm} Statistical analysis of measures of non-convexity}
\end{center}

Here we present some additional figures and tables for Section 6.

\begin{figure}[!ht]
\begin{center}
	\begin{tikzpicture}
		\node (image) at (0,0) {
			{\includegraphics[width=0.65\textwidth]{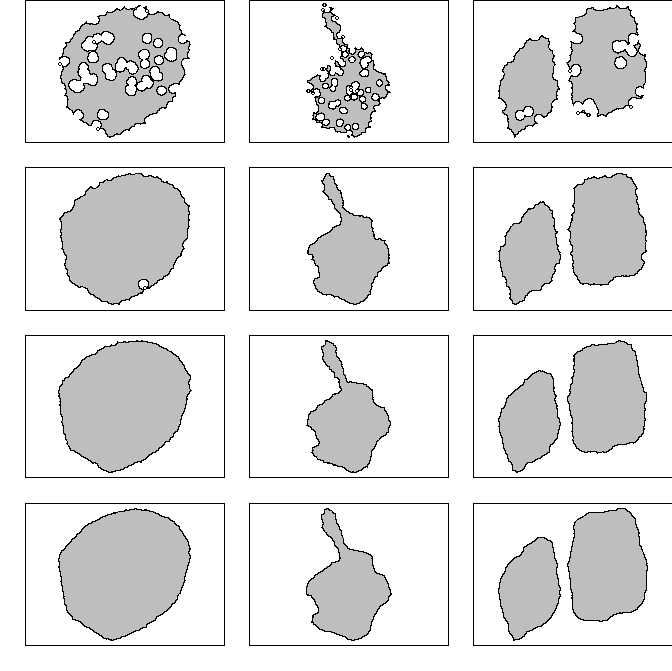}}
		};
		\node at (-3.5,5.8) {(a)};
		\node at (0.2,5.8) {(b)};
		\node at (3.7,5.8) {(c)};
		
		
		\node[rotate=90] at (-5.5,4.2) {{\small{$n=1000$}}};
		\node[rotate=90] at (-5.5,1.4) {{\small{$n=2500$}}};
		\node[rotate=90] at (-5.5,-1.3) {{\small{$n=5000$}}};
		\node[rotate=90] at (-5.5,-4) {{\small{$n=10000$}}};
	\end{tikzpicture}
	\caption{In gray, ${S}_n=C_r(\aleph_n)$, based on a sample of $n$ uniform points (from top row to bottom row, $n=1000,2500,5000,10000$) within the white area of the corresponding binary segmentation masks. In all cases, $r=7.5$.}\label{Mrhull8}
\end{center}
\end{figure}

\begin{figure}[!ht]
\begin{center}
	\begin{tikzpicture}
		\node (image) at (0,0) {
			{\includegraphics[width=0.65\textwidth]{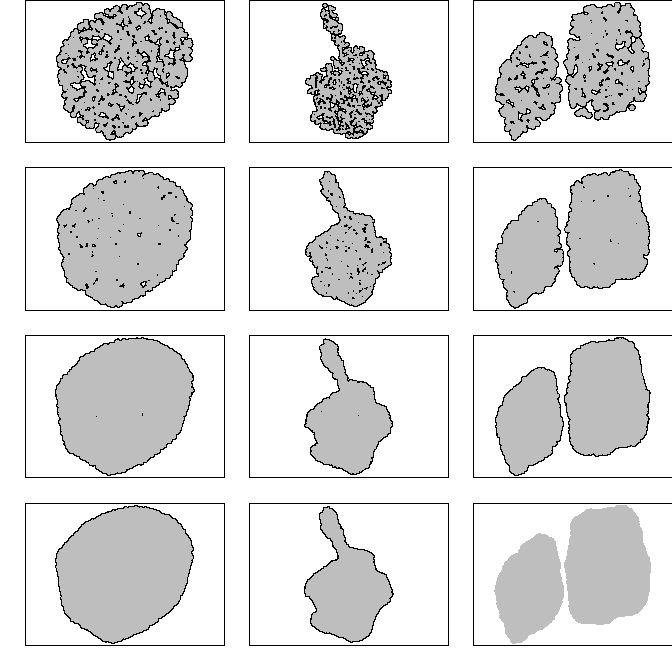}}
		};
		\node at (-3.5,5.8) {(a)};
		\node at (0.2,5.8) {(b)};
		\node at (3.7,5.8) {(c)};
		
		\node[rotate=90] at (-5.5,4.2) {{\small{$n=1000$}}};
		\node[rotate=90] at (-5.5,1.4) {{\small{$n=2500$}}};
		\node[rotate=90] at (-5.5,-1.3) {{\small{$n=5000$}}};
		\node[rotate=90] at (-5.5,-4) {{\small{$n=10000$}}};

	\end{tikzpicture}
	\caption{In gray, ${S}_n=B(\aleph_n,\epsilon_n)$, based on a sample of $n$ uniform points (from top row to bottom row, $n=1000,2500,5000,10000$) within the white area of the corresponding binary segmentation masks. In all cases, $\eps_n=5$}\label{Wdw}
\end{center}
\end{figure}

\begin{table}[!ht]
\begin{center}
	\rotatebox{90}{\begin{tabular}{cccccccc}
			&&\multicolumn{6}{c}{$\hat{\mathcal{M}}(S_n)$}\\
			\cline{3-8}
			&&\multicolumn{2}{c}{$r=10$}&\multicolumn{2}{c}{$r=8$}&\multicolumn{2}{c}{$r=5$}\\
			\cline{3-8}
			Image&$n$&Mean & Sd&Mean & Sd&Mean & Sd\\
			\hline
			\multirow{4}{*}{{\includegraphics[width=0.15\textwidth]{figures/ISIC_0024764_segmentation.png}}}
			& 1000 & 0.8115 & 0.0110 & 0.2868 & 0.0258 & 0.0731 & 0.0128 \\ 
			& 2500 & 0.2008 & 0.0171 & 0.0367 & 0.0041 & 0.0299 & 0.0027 \\ 
			& 5000 & 0.0314 & 0.0031 & 0.0231 & 0.0017 & 0.0200 & 0.0015 \\ 
			& 10000 & 0.0194 & 0.0012 & 0.0163 & 0.0010 & 0.0145 & 0.0010 \\
			
			\hline
			\multirow{4}{*}{{\includegraphics[width=0.15\textwidth]{figures/ISIC_0024726_segmentation.png}}}
			& 1000 & 0.8848 & 0.0077 & 0.5042 & 0.0206 & 0.3007 & 0.0149 \\ 
			& 2500 & 0.4291 & 0.0147 & 0.2563 & 0.0052 & 0.2471 & 0.0042 \\ 
			& 5000 & 0.2517 & 0.0041 & 0.2393 & 0.0026 & 0.2352 & 0.0026 \\ 
			& 10000 & 0.2355 & 0.0017 & 0.2316 & 0.0017 & 0.2290 & 0.0017 \\
			\hline
			\multirow{4}{*}{{\includegraphics[width=0.15\textwidth]{figures/ISIC_0025586_segmentation.png}}}
			& 1000 & 0.7315 & 0.0147 & 0.3155 & 0.0190 & 0.2332 & 0.0082 \\ 
			& 2500 & 0.2564 & 0.0118 & 0.2025 & 0.0040 & 0.1934 & 0.0052 \\ 
			& 5000 & 0.1929 & 0.0027 & 0.1860 & 0.0024 & 0.1716 & 0.0062 \\ 
			& 10000 & 0.1803 & 0.0016 & 0.1768 & 0.0015 & 0.1529 & 0.0046 \\ 
			
			\hline
	\end{tabular}}
	\caption{For each image and each sample size, mean value and standard deviation of $\hat{\mathcal{M}}({S}_n)$ over 200 uniform samples (on the white area), being $S_n=C_r(\aleph_n)$.}\label{tab:isic2b}
\end{center}
\end{table}

\begin{table}[!ht]
\begin{center}
	\rotatebox{90}{\begin{tabular}{cccccccccc}
			&&\multicolumn{8}{c}{$\hat{\mathcal{W}}_m(S_n)$}\\
			\cline{3-10}
			&&\multicolumn{2}{c}{$\eps_n=5.5$}&\multicolumn{2}{c}{$\eps_n=5$}&\multicolumn{2}{c}{$\eps_n=4.5$}&\multicolumn{2}{c}{$\eps_n=4$}\\
			\cline{3-10}
			Image&$n$&Mean & Sd&Mean & Sd&Mean & Sd&Mean & Sd\\
			\hline
			\multirow{4}{*}{{\includegraphics[width=0.15\textwidth]{figures/ISIC_0024764_segmentation.png}}}
			&1000 & 0.0838 & 0.0099 & 0.1277 & 0.0121 & 0.1901 & 0.0124 & 0.2690 & 0.0128 \\ 
			&2500 & 0.0018 & 0.0009 & 0.0053 & 0.0016 & 0.0142 & 0.0029 & 0.0348 & 0.0045 \\ 
			&5000 & 0.0000 & 0.0000 & 0.0001 & 0.0001 & 0.0002 & 0.0002 & 0.0010 & 0.0005 \\ 
			&10000 & 0.0000 & 0.0000 & 0.0000 & 0.0000 & 0.0000 & 0.0000 & 0.0000 & 0.0000 \\ 
			
			\hline
			\multirow{4}{*}{{\includegraphics[width=0.15\textwidth]{figures/ISIC_0024726_segmentation.png}}}
			&1000 & 0.1472 & 0.0105 & 0.1956 & 0.0117 & 0.2573 & 0.0121 & 0.3356 & 0.0119 \\ 
			&2500 & 0.0504 & 0.0019 & 0.0563 & 0.0024 & 0.0690 & 0.0037 & 0.0949 & 0.0053 \\ 
			&5000 & 0.0467 & 0.0010 & 0.0471 & 0.0010 & 0.0480 & 0.0011 & 0.0501 & 0.0012 \\ 
			&10000 & 0.0460 & 0.0008 & 0.0465 & 0.0008 & 0.0470 & 0.0008 & 0.0475 & 0.0008 \\
			\hline
			\multirow{4}{*}{{\includegraphics[width=0.15\textwidth]{figures/ISIC_0025586_segmentation.png}}}
			&1000 & 0.1589 & 0.0093 & 0.1918 & 0.0108 & 0.2372 & 0.0110 & 0.2989 & 0.0117 \\ 
			&2500 & 0.1027 & 0.0040 & 0.1121 & 0.0040 & 0.1236 & 0.0043 & 0.1398 & 0.0048 \\ 
			&5000 & 0.0911 & 0.0023 & 0.0992 & 0.0021 & 0.1079 & 0.0024 & 0.1165 & 0.0024 \\ 
			&10000 & 0.0840 & 0.0015 & 0.0920 & 0.0015 & 0.1001 & 0.0015 & 0.1082 & 0.0016 \\ 
			\hline
	\end{tabular}}
	\caption{For each image and each sample size, mean value and standard deviation of $\hat{\mathcal{W}}_m({S}_n)$ over 200 uniform samples (on the white area), being $S_n=B(\aleph_n,\eps_n)$.}\label{tab:isic2}
\end{center}
\end{table}


\end{document}